\providecommand{\U}[1]{\protect\rule{.1in}{.1in}}
\newtheorem{theorem}{Theorem}
\newtheorem{corollary}[theorem]{Corollary}
\newtheorem{definition}[theorem]{Definition}
\newtheorem{lemma}[theorem]{Lemma}
\newtheorem{remark}[theorem]{Remark}
\newenvironment{proof}[1][Proof]{\noindent \textbf{#1.} }{\  \rule{0.5em}{0.5em}}
\begin{document}

\title{Note on Viscosity Solution of Path-Dependent PDE and $G$-Martingales --- 2nd version}
\author{Shige Peng\\School of Mathematics, Shandong University}
\date{February 19, 2012}
\maketitle

\abstract{In this note we introduce the notion of viscosity solution
for a type of fully nonlinear parabolic path-dependent partial
differential equations (P-PDE).  We obtain new maximum principles,
(or comparison theorem) for smooth solutions as well as for
viscosity solutions. A solution of a backward stochastic
differential equation and a $G$-martingale under a $G$-expectation
are typical examples of such type of solutions of P-PDE.  }

\medskip\medskip

\noindent\textbf{Keyword:} backward stochastic differential equation,
nonlinear expectation, $G$-Brownian motion, Path-dependent PDE, viscosity
solution, $g$-expectation, $G$-expectation, $g$-martingale, $G$-martingale,
It\^o integral and It\^o's calculus.

\medskip

\noindent{\textbf{{\small Mathematics Subject Classification (2010).}} 60H,
60E, 62C, 62D, 35J, 35K}

\section{Introduction}

In general, a solution of a backward stochastic differential equation (BSDE in
short) is an adapted process, namely, the value of this process at each time
$t$ is a functional of the corresponding continuous path on $[0,t]$. When this
value depends only on the current state of the path $\omega(t)$, we have
proved (see [Peng1991], [Peng1992a,b], and [Peng-Pardoux1992]) that the
solution of the BSDE is in fact a solution of a quasi-linear parabolic PDE.
This relation was given by introducing what we called nonlinear Feynman-Kac
formula. Recently we have introduced a new notion of $G$-martingale under a
fully nonlinear expectation called $G$-expectation. A $G$-martingale can be
also regarded as a solution of fully nonlinear BSDE if the solution is state
dependent. We also refer to the 2BSDE formulation for such second order
nonlinear BSDE (see \cite{CSTV2007} and \cite{STZh2009-2010}). It is then a
very interesting problem, which was proposed in my lecture of ICM2010
[Peng2010b], whether a path dependent solution of a BSDE and/or a
$G$-martingale can be considered as a nonlinear path-dependent PDE (PPDE) of
parabolic and/or elliptic types.

Facing this challenge, in this note we will introduce a notion of
viscosity solution for the above mentioned types of quasi-linear or
fully nonlinear PPDE. Just as in the case of the classical PDE, an
important advantage of a viscosity solution of PPDE is that we only
need it to be a continuous functional of paths. Here a crucially
important task is to prove the corresponding comparison (also called
maximum) principle, or comparison theorem, which is the main
objective of this note.

Smooth solutions of linear path-dependent PDE of parabolic types were
initially introduced in [Dupire2009] in which a new type of functional It\^{o}
formula was proved and then used to find a $\mathbb{C}^{1,2}$ solution of the
PDE. We also refer to [Cont-Fourni\'{e}2010a,b] for further developments of
this new calculus of It\^{o}'s type with applications to finance. Recently we
have obtained the existence and uniqueness of systems of smooth solutions of
quasi-linear path-dependent PDE by a BSDE approach, see [{Peng-Wang2011].
}These methods are mainly based on stochastic calculus. Our approach in this
note is based on techniques of PDE and can be directly applied to treat fully
nonlinear path-dependent PDE. The advantage of this PDE approach is that, one
can treat the solution locally (path by path), whereas BSDE and $G$%
-expectation are mainly a global approach.

In this 2nd version of the paper, the main improvement is as
follows: It is known that the proof of maximum principles often
involves a maximization procedure of the difference of a subsolution
and a supersolution. Here a main difficulty is how to find a path
which maximizes this difference, for the situation that the space of
the path is not compact. In the 1st version, in order to get ride of
this difficulty, we introduced an approach of \textquotedblleft
frozenness\textquotedblright\ of the main course of the paths where
the maximization takes place. But to apply this frozen procedure, we
need to modify the definition of time-derivative (or horizontal
derivative) which is a heavy cost. In this new version, we have
improved this \textquotedblleft frozen method\textquotedblright\ to
a \textquotedblleft left frozen\textquotedblright\ one. Using this
we can find a desired maximum path without changing the original
Dupire's definition of the horizontal derivative. This method can be
applied to obtain a comparison principle for smooth solutions as
well as for viscosity solutions of 1st and 2nd order fully nonlinear
PPDE. But for the case of the viscosity solution of 2nd order PPDE,
in order to get the comparison principle, we need solutions to
satisfy Condition (\ref{u-v}), which is still to be improved.

This paper uses PDE methods and the results can have direct applications to
stochastic analysis, e.g., martingales under a fully nonlinear expectation,
called $G$-expectation, stochastic optimal controls, stochastic games,
nonlinear pricing and risk measuring, and backward SDE. Recently many people
are very interested in this new theory of path dependent PDE. \cite{EKTZ}%
(EKTZ2011) introduced a different stochastic approach to derive a maximum
principle for a type of quasilinear PPDE, and the corresponding Perron's
approach to get the existence.

The note is organized as follows: in the next section we mainly recall the
notion of space and time (or vertical and horizontal) derivatives of
functional of paths, borrowed from [Dupire2009]. In section 3 we will
introduce the \textquotedblleft left frozen maximization\textquotedblright%
\ approach to obtain the maximum principle for $\mathbb{C}^{1,2}$-solutions of
fully nonlinear PDE. In Section 4, we introduce the notion of viscosity
solution of fully nonlinear path-dependent PDE. Section 5 is devoted to prove
the maximum principle of these new PDE for viscosity solutions. Many important
properties of this PPDE, such as uniqueness, monotonicity, positive
homogeneity and convexity can be derived from this new maximum principle. It
also provides a new PDE formulation of $G$-expectations with random
coefficients $G$ (see [Nutz2010] for a formulation of stochastic calculus).

After the 1st version of this paper, Xiangdong Li told me about a different
formulation of a type of loop-dependent PDE introduced by [Polyakov1980] in
Gauge theory (see also Li's paper). It's relation with the present path
dependent PDE is also an interesting problem.

\section{Notations}

For vectors $x,y\in\mathbb{R}^{n}$, we denote their scalar product by
$\left\langle x,y\right\rangle $ and the Euclidean norm $\left\langle
x,x\right\rangle ^{1/2}$ by $|x|$. We also denote the linear space of $n\times
n$ symmetric matrices by $\mathbb{S}(n)$.

The following notations are mainly from [Dupire2009]. Let $T>0$ be fixed. For
each $t\in\lbrack0,T]$, we denote by $\Lambda_{t}$ the set of right
continuous, $\mathbb{R}^{d}$-valued functions on $[0,t]$, namely,
$s_{i}\downarrow s$ implies $\omega(s_{i})\rightarrow\omega(s)$, for each
$\omega\in\Lambda_{t}$.

For each $\omega\in\Lambda_{T}$ the value of $\omega$ at time $s\in
\lbrack0,T]$ is denoted by $\omega(s)$. Thus $\omega=\omega(s)_{0\leq s\leq
T}$ is a right continuous process on $[0,T]$ and its value at time $s$ is
$\omega(s)$. The path of $\omega$ up to time $t$ is denoted by $\omega_{t}$,
i.e., $\omega_{t}=\omega(s)_{0\leq s\leq t}\in\Lambda_{t}$. We denote
$\Lambda=\bigcup_{t\in\lbrack0,T]}\Lambda_{t}$. We also specifically write
\[
\omega_{t}=\omega(s)_{0\leq s\leq t}=(\omega(s)_{0\leq s<t},\omega(t))
\]
to indicate the terminal position $\omega(t)$ of $\omega_{t}$ which plays a
special role in this framework. For each $\omega_{t}\in\Lambda$ and
$x\in\mathbb{R}^{d}$ we denote
\[
\omega_{t}^{x}(s)=\left\{
\begin{array}
[c]{ll}%
\omega(s), & \text{if }0\leq s<t,\\
\omega(t)+x, & \text{if }s=t,
\end{array}
\ \right.  \omega_{t,\delta}(s)=\left\{
\begin{array}
[c]{cl}%
\omega(s), & \text{if }0\leq s<t,\\
\omega(t), & \text{if }t\leq s<t+\delta.
\end{array}
\ \right.
\]
Sometimes we denote $(\omega_{t})^{x}=\omega_{t}^{x}$, and $(\omega
_{t})_{t,\delta}=\omega_{t,\delta}$. We also denote
\[
(\omega_{t}^{x})_{t,\delta}(s)=\left\{
\begin{array}
[c]{ll}%
\omega(s), & \text{if }0\leq s<t,\\
\omega(t)+x, & \text{if }t\leq s<t+\delta.
\end{array}
\ \right.
\]
Let $\bar{\omega}_{\bar{t}}$, $\omega_{t}\in\Lambda_{\bar{Q}}$ be given with
$t\geq\bar{t}$, we denote $\bar{\omega}_{\bar{t}}\otimes\omega_{t}\in
\Lambda_{t}$ by
\[
\bar{\omega}_{\bar{t}}\otimes\omega_{t}:=\left\{
\begin{array}
[c]{ll}%
\bar{\omega}(s), & \text{if }0\leq s<\bar{t},\\
\omega(s), & \text{if }\bar{t}\leq s<t.
\end{array}
\ \right.
\]
For a given open subset $Q\subset\mathbb{R}^{d}$, we denote it boundary by
$\partial Q$ and $\bar{Q}=Q\cup\partial Q$.
\begin{align*}
\Lambda_{Q_{t}}  &  :=\{\omega_{t}\in\Lambda_{t}:\omega(s)\in Q,\ s\in
\lbrack0,t]\},\,\,\Lambda_{Q}:=\bigcup_{t\in\lbrack0,T)}\Lambda_{Q_{t}},\ \\
\ \ \Lambda_{\bar{Q}_{t}}  &  :=\{\omega_{t}\in\Lambda:\omega(s)\in
Q,\ s\in\lbrack0,t),\ \omega(t)\in\bar{Q}\},\ \ \ \Lambda_{\bar{Q}}%
:=\bigcup_{t\in\lbrack0,T]}\Lambda_{\bar{Q}_{t}},\\
\Lambda_{\partial Q}  &  :=\{\omega_{t}\in\Lambda:\omega(s)\in Q,\ s\in
\lbrack0,t),\ \omega(t)\in\partial Q\}\cup\Lambda_{\bar{Q}_{T}}.
\end{align*}

We are interested in path functions. A path function $u$ is a real function
defined $\Lambda_{\bar{Q}}$, i.e., $u:\Lambda_{\bar{Q}}\mapsto\mathbb{R}$.
This function $u=u(\omega_{t})$, $\omega_{t}\in\Lambda_{\bar{Q}}$ can be also
regarded as a family of real valued functions :
\[
u(\omega_{t})=u(t,\omega(s)_{0\leq s\leq t})=u(t,\omega(s)_{0\leq s<t}%
,\omega(t)):\omega_{t}\in{\Lambda}_{\bar{Q}_{t}},\ \ t\in\lbrack0,T].
\]

\begin{definition}
We define
\begin{align*}
USC_{\ast}(\Lambda_{\bar{Q}})  &  :=\{u:\Lambda\rightarrow\mathbb{R}\text{:
such that}\\
&  \text{(i) For each fixed }\omega_{\hat{t}}\in\Lambda\bar{u}(t,x):=u((\omega
_{\hat{t}}^{x})_{\hat{t},t})\text{, }\\
&  \ \ \ \ \text{is a USC}\text{-function of } (t,x+\omega(\hat{t}))\in
\lbrack0,T-\hat{t}]\times\bar{Q} ;\\
&  \text{(ii)\ For each }\omega_{t}\in\Lambda_{\bar{Q}}\text{ with }%
t_{i}\uparrow t,\ \ \limsup_{i\rightarrow\infty}u(\omega_{t_{i}})\leq\sup
_{x}u(\omega_{t}^{x})\}.
\end{align*}
We also denote $LSC_{\ast}(\Lambda_{\bar{Q}}):=\{-u|u\in USC_{\ast}%
(\Lambda_{\bar{Q}})\}$. A function $u\in USC_{\ast}(\Lambda_{\bar{Q}})$ (resp.
$u\in LSC_{\ast}(\Lambda_{\bar{Q}})$) is called an $\Lambda$-upper (resp.
$\Lambda$-lower) semi continuous function. $u\in C_{\ast}(\Lambda_{\bar{Q}%
}):=$ $USC_{\ast}(\Lambda_{\bar{Q}})\cap LSC_{\ast}(\Lambda_{\bar{Q}})$ is
called an $\Lambda$-continuous function.
\end{definition}

\begin{definition}
Let $u:\Lambda_{\bar{Q}}\mapsto\mathbb{R}$ and $\omega_{t}\in\Lambda_{Q}$ be
given. If there exists $p\in\mathbb{R}^{d}$ such that
\[
u(\omega_{t}^{x})=u(\omega_{t})+\left\langle p,x\right\rangle
+o(|x|),\ \ x+\omega(t)\in Q,
\]
then we say $u$ is (vertically) differentiable (in $x$) at $\omega_{t}$, and
denote $D_{x}u(\omega_{t})=p$. If moreover, there exists $A\in\mathbb{S}(d)$
such that
\[
u(\omega_{t}^{x})=u(\omega_{t})+\left\langle p,x\right\rangle +\frac{1}%
{2}\left\langle Ax,x\right\rangle +o(|x|^{2}),\ \ x+\omega(t)\in Q,
\]
then we say $u$ is (vertically) twice differentiable (in $x$) at $\omega_{t}$,
and denote $D_{xx}^{2}u(\omega_{t})=A$.
\end{definition}

\begin{definition}
Let $u:\Lambda_{\bar{Q}}\mapsto\mathbb{R}$ and $\omega_{t}\in\Lambda_{Q}$ be
given. If there exists $a\in\mathbb{R}$ such that
\[
u(\omega_{t,\delta})-u(\omega_{t})=a\delta+o(\delta),\ \ \ \
\]
then we say that $u(\omega_{t})$ is (horizontally) differentiable (in $t$) at
$\omega_{t}$ and denote $D_{t}u(\omega_{t})=a$.
\end{definition}

\begin{definition}
We define $\mathbb{C}^{1,0}(\Lambda_{\bar{Q}})$, the set of functions
$u:=u(\omega_{t})$, $\omega_{t}\in\Lambda_{\bar{Q}}$ for which $D_{t}%
u(\omega_{t})$ exists, for each $\omega_{t}\in\Lambda_{Q}$, $t\in\lbrack t,T)$
and $u((\omega_{t})^{x})_{t,s-t})$, $D_{t}u((\omega_{t})^{x})_{t,s-t})$ are
continuous functions of $(s,x+\omega(t))\in\lbrack t,T)\times Q$.
\end{definition}

\begin{definition}
We define $\mathbb{C}^{1,2}(\Lambda_{\bar{Q}})$ as the set of functions
$u:=u(\omega_{t})$:$\Lambda_{\bar{Q}}\mapsto\mathbb{R}$, for which
$\varphi(\omega_{t})=D_{t}u(\omega_{t})$, $D_{x}u(\omega_{t})$, $D_{xx}%
^{2}u(\omega_{t})$ exist for $\omega_{t}\in\Lambda_{Q}$ and such that each
$\varphi((\omega_{t})^{x})_{t,s-t})$ is a continuous function of
$(s,x+\omega(t))\in\lbrack t,T)\times Q$.
\end{definition}

\section{Comparison principle for $\mathbb{C}^{1,2}$-solution of
path-dependent PDE}

In this paper we consider the following problem of path-dependent PDE of
parabolic PDE. To find $u\in\mathbb{C}^{1,2}(\Lambda_{\bar{Q}})$ such that
\begin{equation}%
\begin{array}
[c]{cc}%
D_{t}u(\omega_{t})+ & G(\omega_{t},u(\omega_{t}),D_{x}u(\omega_{t}%
),D_{xx}u(\omega_{t}))=0,
\end{array}
\omega_{t}\in\Lambda_{Q},\ \ \ \ \ \label{C2PPDE}%
\end{equation}
with a Cauchy condition:
\begin{equation}
u(\omega_{t})=\Phi(\omega_{t}),\ \ \ \
\omega_{t}\in\Lambda_{\partial Q},
\end{equation}
where $G:\Lambda_{\bar{Q}}\times\mathbb{R}\times\mathbb{R}^{d}\times
\mathbb{S}(d)\mapsto\mathbb{R}$ and $\Phi:\Lambda_{\partial Q_{T}}%
\mapsto\mathbb{R}$ are given functions. We make the following assumption for
$G$

\begin{description}
\item[(H1)] For each $\omega_{t}\in\Lambda_{Q}$, $u,v\in\mathbb{R}$,
$p\in\mathbb{R}^{d}$ and $X,Y\in\mathbb{S}(d)$ such that $u\geq v$, $X\leq
Y$,
\[
G(\omega_{t},u,p,X)\leq G(\omega_{t},v,p,Y).
\]

\end{description}

\begin{lemma}
[Left frozen maximization]\label{maxLemma}Let $Q$ be a bounded open subset of
$\mathbb{R}^{d}$ and let $u\in USC_{\ast}(\bar{Q})$ be bounded from above.
Then, for each $\omega_{t_{0}}^{(0)}\in\Lambda_{\bar{Q}}$, there exists
$\bar{\omega}_{\bar{t}}\in\Lambda_{\bar{Q}}$, $\bar{t}\in\lbrack t_{0},T]$,
such that $\bar{\omega}_{\bar{t}}=\omega_{t_{0}}^{(0)}\otimes\bar{\omega
}_{\bar{t}}$, $u(\bar{\omega}_{\bar{t}})\geq u(\omega_{t_{0}}^{(0)})$, and
\begin{equation}
u(\bar{\omega}_{\bar{t}})=\sup_{\gamma_{t}\in\Lambda_{\bar{Q}},\ t\geq\bar
{t}.}u(\bar{\omega}_{\bar{t}}\otimes\gamma_{t}).\text{ } \label{rmax}%
\end{equation}

\end{lemma}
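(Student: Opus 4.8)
The plan is to read (\ref{rmax}) as a fixed-point (optimal stopping) statement for a value functional and to locate the maximizing path by freezing its already-determined left part. For an extension $\gamma_s$ of $\omega_{t_0}^{(0)}$, meaning $\gamma_s=\omega_{t_0}^{(0)}\otimes\gamma_s$ with $s\ge t_0$, set $V(\gamma_s):=\sup\{u(\gamma_s\otimes\eta_\tau):\eta_\tau\in\Lambda_{\bar Q},\ \tau\ge s\}$. Since $u$ is bounded from above this is finite, $u(\gamma_s)\le V(\gamma_s)\le M:=V(\omega_{t_0}^{(0)})$, and $V$ is non-increasing under extension, because the continuations of a longer path form a subfamily of those of a shorter one. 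The point of this reformulation is that it suffices to produce an extension $\bar\omega_{\bar t}$ of $\omega_{t_0}^{(0)}$, with $\bar t\in[t_0,T]$ and $u(\bar\omega_{\bar t})\ge u(\omega_{t_0}^{(0)})$, at which $u(\bar\omega_{\bar t})=V(\bar\omega_{\bar t})$: every continuation of $\bar\omega_{\bar t}$ is then a continuation of $\omega_{t_0}^{(0)}$, so $V(\bar\omega_{\bar t})$ equals the right-hand side of (\ref{rmax}).

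I would construct $\bar\omega_{\bar t}$ by a nested gap-reduction scheme with the committed left part frozen. Put $\gamma^{(0)}=\omega_{t_0}^{(0)}$; given $\gamma^{(n)}$ of length $\tau_n$ with gap $d_n:=V(\gamma^{(n)})-u(\gamma^{(n)})$ (stopping if $d_n=0$), pick by definition of the supremum a continuation $\gamma^{(n+1)}=\gamma^{(n)}\otimes\eta^{(n)}$ with $u(\gamma^{(n+1)})>V(\gamma^{(n)})-d_n/2$. Then $u(\gamma^{(n)})$ is nondecreasing and bounded by $M$, while $d_{n+1}\le V(\gamma^{(n)})-u(\gamma^{(n+1)})<d_n/2$, so $d_n\to0$ and $u(\gamma^{(n)})$, $V(\gamma^{(n)})$ tend to a common limit $L\ge u(\omega_{t_0}^{(0)})$. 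The paths are nested on their common left parts, so $\bar\omega(s):=\lim_n\gamma^{(n)}(s)$ is well defined with $\bar\omega(s)\in Q$ for $s<\bar t:=\lim_n\tau_n\in[t_0,T]$.

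To pass to the limit I would invoke the two clauses of the definition of $USC_\ast(\Lambda_{\bar Q})$. By clause (i), taken at the final time $\bar t$ with zero frozen duration, the function sending $z\in\bar Q$ to the value of $u$ at the limit path on $[0,\bar t)$ completed by terminal value $z$ is upper semicontinuous on the compact set $\bar Q$, hence attains its maximum at some $\bar z$; I take $\bar\omega_{\bar t}$ to be the limit path completed by $\bar z$, so that $u(\bar\omega_{\bar t})=\sup_x u(\bar\omega_{\bar t}^{x})$. Granting the bookkeeping discussed below, namely that each $\gamma^{(n)}$ coincides with the restriction of $\bar\omega_{\bar t}$ to $[0,\tau_n]$, clause (ii) applied along $\tau_n\uparrow\bar t$ gives $L=\limsup_n u(\gamma^{(n)})\le\sup_x u(\bar\omega_{\bar t}^{x})=u(\bar\omega_{\bar t})$. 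Conversely, monotonicity of $V$ under extension yields $V(\bar\omega_{\bar t})\le V(\gamma^{(n)})\to L$, since every continuation of $\bar\omega_{\bar t}$ is a continuation of each $\gamma^{(n)}$. As $V\ge u$ always holds, $u(\bar\omega_{\bar t})=V(\bar\omega_{\bar t})=L$, which is exactly (\ref{rmax}).

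The main obstacle, and the reason for the left-frozen device, is that $\Lambda$ is not compact, so a maximizing sequence need not converge; freezing the committed left part is what forces the approximants to be restrictions of one limit path, which is the only situation clause (ii) can digest. The genuinely delicate step is reconciling this with near-optimality: clause (ii) controls the value only along the left-hand approaches of a single fixed path (morally, frozen approaches), whereas a continuation realizing $V(\gamma^{(n)})$ may have to move at the connection time $\tau_n$. Thus I expect the real fight to be in choosing the $\eta^{(n)}$ so that, up to the error budget $d_n/2$, the terminal values match at every $\tau_n$ and the limit of the restrictions is simultaneously a legitimate path, a maximizer in the terminal value at time $\bar t$, and the carrier of the value $L$; arranging this without either overcompactifying the space or perturbing the running values $u(\gamma^{(n)})\uparrow L$ is where the left-frozen maximization does its work.
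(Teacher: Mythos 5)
Your construction is, in substance, the paper's own proof: your $V(\gamma^{(n)})$ and gap $d_n$ are the paper's $\bar m_i$ and $\bar m_i-m_i$, your selection rule $u(\gamma^{(n+1)})>V(\gamma^{(n)})-d_n/2$ is the paper's $m_{i+1}\geq(m_i+\bar m_i)/2$, and your limit step (complete the union path by a terminal value maximizing $x\mapsto u(\bar\omega_{\bar t}^{x})$ over the compact set $\bar Q$ via clause (i), invoke clause (ii) along $\tau_n\uparrow\bar t$, and use monotonicity of $V$ under extension) is the paper's closing argument. Everything therefore hinges on the step you explicitly ``grant'': that each $\gamma^{(n)}$ is the restriction of $\bar\omega_{\bar t}$ to $[0,\tau_n]$. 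This is a genuine gap, your diagnosis of it is correct, and your proposed repair points the wrong way. Since $\otimes$ overwrites the terminal value of the left path, a continuation $\eta^{(n)}$ nearly attaining $V(\gamma^{(n)})$ may have to jump at the connection time $\tau_n$; right-continuity of the limit path then forces $\bar\omega(\tau_n)=\eta^{(n)}(\tau_n)\neq\gamma^{(n)}(\tau_n)$, so the $\gamma^{(n)}$ are not restrictions of $\bar\omega_{\bar t}$ and clause (ii) says nothing about $\limsup_n u(\gamma^{(n)})$. Moreover you cannot in general force the terminal values to match within the error budget: the supremum defining $V(\gamma^{(n)})$ restricted to continuations with $\eta^{(n)}(\tau_n)=\gamma^{(n)}(\tau_n)$ can be strictly smaller, because functionals in $USC_{\ast}$ are allowed to see (and reward) jumps.

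The paper's device, which is the one ingredient missing from your scheme, is to maximize vertically at every stage, not only at the limit: each $\omega_{t_i}^{(i)}$ is required to satisfy $u(\omega_{t_i}^{(i)})\geq u(\omega_{t_i}^{(i),x})$ for all admissible $x$; that is, after choosing a near-optimal continuation one replaces its terminal position by a maximizer of $u$ over terminal values in $\bar Q$, which exists by clause (i) plus compactness, exactly as in your limit step. This normalization is free: it can only increase $u(\gamma^{(n+1)})$, so near-optimality survives, and it changes neither $V(\gamma^{(n+1)})$ nor any later concatenation, since $\otimes$ never sees the left path's terminal value. Its payoff is that $m_n:=u(\gamma^{(n)})$ becomes $\sup_x u(\gamma^{(n),x}_{\tau_n})$, a quantity determined by $\gamma^{(n)}$ on $[0,\tau_n)$ alone, where all later paths and $\bar\omega_{\bar t}$ agree; hence $m_n=\sup_x u(\bar\omega^{x}_{\tau_n})$, and the terminal-value mismatch becomes harmless: clause (ii), read (as the paper reads it) as controlling these left-approach values of the single path $\bar\omega_{\bar t}$, yields $\lim_n m_n\leq\sup_x u(\bar\omega^{x}_{\bar t})=u(\bar\omega_{\bar t})$, which is what closes the argument and gives (\ref{rmax}). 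Without this per-stage vertical maximization your proof does not go through as written.
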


\begin{proof}
Without loss of generality, we can assume that $u(\omega_{t_{0}}^{(0)})\geq
u(\omega_{t_{0}}^{(0),x})$, for all $x$ such that $x+\omega^{(0)}(t_{0}%
)\in\bar{Q}$. We set $m_{0}:=u(\omega_{t_{0}}^{(0)})$ and%
\[
\bar{m}_{0}:=\sup_{\gamma_{t}\in\Lambda_{\bar{Q}},\ t\geq t_{0}}%
u(\omega_{t_{0}}^{(0)}\otimes\gamma_{t})\geq m_{0}.
\]
If $\bar{m}_{0}=m_{0}$ then we can take $\bar{\omega}_{\bar{t}}=\omega_{t_{0}%
}^{(0)}$ and finish the procedure. Otherwise there exists $\omega_{t_{1}%
}^{(1)}\in\Lambda_{\bar{Q}}$ with $t_{1}>t_{0}$, such that $\omega_{t_{1}%
}^{(1)}=$ $\omega_{t_{0}}^{(0)}\otimes\omega_{t_{1}}^{(1)}$ and
\[
m_{1}:=u(\omega_{t_{1}}^{(1)})\geq\frac{m_{0}+\bar{m}_{0}}{2}.
\]
We set%
\[
\bar{m}_{1}:=\sup_{\gamma_{t}\in\Lambda_{\bar{Q}},\ t\geq t_{1}}%
u(\omega_{t_{1}}^{(1)}\otimes\gamma_{t})\geq m_{1}.
\]
If $\bar{m}_{1}=m_{1}$ then we can take $\bar{\omega}_{\bar{t}}=\omega_{t_{1}%
}^{(1)}$ and finish the procedure. Otherwise we can find, for $i=2,3,\cdots$,
$\omega_{t_{i}}^{(i)}\in\Lambda_{\bar{Q}}$ with, $t_{i}>t_{i-1}$ such that
$\omega_{t_{i}}^{(i)}=$ $\omega_{t_{i-1}}^{(i-1)}\otimes\omega_{t_{i}}^{(i)}$,
$u(\omega_{t_{i}}^{(i)})\geq u(\omega_{t_{i}}^{(i),x})$, for all $x$ such that
$x+\omega^{(i)}(t_{i})\in\bar{Q}$ and
\begin{align*}
m_{i}  &  :=u(\omega_{t_{i}}^{(i)})\geq\frac{m_{i-1}+\bar{m}_{i-1}}{2},\\
\bar{m}_{i}  &  :=\sup_{\gamma_{t}\in\Lambda_{Q},\ t\geq t_{i}}u(\omega
_{t_{i}}^{(i)}\otimes\gamma_{t})\geq m_{i}.
\end{align*}
and continue this procedure till the first time when $\bar{m}_{i}=m_{i}$ and
then finish the proof by setting $\bar{\omega}_{\bar{t}}=\omega_{t_{i}}^{(i)}%
$. For the last and \textquotedblleft worst\textquotedblright\ case in which
$\bar{m}_{i}>m_{i}$, for all $i=0,1,2,\cdots$, we have $t_{i}\uparrow\bar
{t}\in\lbrack0,T]$. Then we can find $\bar{\omega}_{\bar{t}}\in\Lambda
_{\bar{Q}}$ such that $\bar{\omega}_{\bar{t}}=\omega_{t_{i}}^{(i)}\otimes
\bar{\omega}_{\bar{t}}$. Since $u\in USC_{\ast}(\Lambda_{\bar{Q}})$, we can
choose $\bar{\omega}(\bar{t})\in\bar{Q}$ such that $u(\bar{\omega}_{\bar{t}%
})\geq u(\bar{\omega}_{\bar{t}}^{x})$, for all $x$ such that $x+\bar{\omega
}(\bar{t})\in\bar{Q}$. Since
\[
\bar{m}_{i+1}-m_{i+1}\leq\bar{m}_{i}-\frac{\bar{m}_{i}+m_{i}}{2}=\frac{\bar
{m}_{i}-m_{i}}{2},
\]
thus there exists $\bar{m}\in(m_{0},\bar{m}_{0})$, such that $\bar{m}%
_{i}\downarrow\bar{m}$ and $m_{i}\uparrow\bar{m}$. Thus $\lim_{i\rightarrow
\infty}u(\omega_{t_{i}}^{(i)})=\lim_{i\rightarrow\infty}m_{i}\leq
u(\bar{\omega}_{\bar{t}})$. \ We can claim that (\ref{rmax}) holds for this
$\bar{\omega}_{\bar{t}}$. Indeed, otherwise there exist $\gamma_{t}\in
\Lambda_{\bar{Q}}$ and $\delta>0$ with $t>\bar{t}$ and $\gamma_{t}=\bar
{\omega}_{\bar{t}}\otimes\gamma_{t}$, such that
\[
u(\bar{\omega}_{\bar{t}}\otimes\gamma_{t})\geq u(\bar{\omega}_{\bar{t}%
})+\delta=\bar{m}+\delta,\text{ }%
\]
then the following contradiction is induced:
\[
u(\bar{\omega}_{\bar{t}}\otimes\gamma_{t})=u(\omega_{t_{i}}^{(i)}\otimes
\gamma_{t})\leq\bar{m}_{i}\rightarrow\bar{m}.
\]

The proof is complete.
\end{proof}

\begin{definition}
A function $u\in\mathbb{C}^{1,2}(\Lambda_{\bar{Q}})$ is called a
$\mathbb{C}^{1,2}$-solution of the path dependent PDE (\ref{C2PPDE}) if for
each $\omega_{t}\in\Lambda_{Q}$, $t\in\lbrack0,T)$, the equality
(\ref{C2PPDE}) is satisfied. $u$ is called a subsolution (resp. supersolution)
of (\ref{C2PPDE}) if the \textquotedblleft$=$\textquotedblright\ in
(\ref{C2PPDE}) is replace by \textquotedblleft$\geq$\textquotedblright\ (resp.
\textquotedblleft$\leq$\textquotedblright).
\end{definition}

\begin{remark}
The solution of classical PDE is a special case when $u(\omega_{t})=\bar
{u}(t,\omega(t))$, $\bar{u}\in C^{1,2}([0,T)\times\bar{Q})$. Since, for each
$\omega_{t}\in\Lambda_{Q}$ and $t\in(0,T)$,
\[
\partial_{t}\bar{u}(t,\omega(t))=D_{t}u(\omega_{t}),\ \ \ D_{x}\bar
{u}(t,\omega(t))=D_{x}u(\omega_{t}),\ \ D_{xx}^{2}\bar{u}(t,\omega
(t))=D_{xx}^{2}u(\omega_{t}),\
\]
thus $\bar{u}(t,x)$ is a classical solution of PDE.
\end{remark}

\begin{lemma}
\label{max}Let $u\in\mathbb{C}^{1,2}(\Lambda_{\bar{Q}})$ and $\hat{\omega
}_{\hat{t}}\in\Lambda_{Q}$, $\hat{t}\in\lbrack0,T)$ be given satisfying
\begin{equation}
u(\hat{\omega}_{\hat{t}})\geq u(\hat{\omega}_{\hat{t}}\otimes\omega
_{t}),\ \ \ \text{for all }\,\,\omega_{t}\in\Lambda_{{Q}},\ t\geq\hat{t}.
\label{umax}%
\end{equation}
Then we have
\begin{equation}
D_{t}u(\hat{\omega}_{\hat{t}})\leq0,\ D_{x}u(\hat{\omega}_{\hat{t}%
})=0,\ \ D_{xx}^{2}u(\hat{\omega}_{\hat{t}})\leq0.\ \label{MAXtx}%
\end{equation}

\end{lemma}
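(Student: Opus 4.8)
The plan is to test the maximality hypothesis (\ref{umax}) against two specially chosen families of continuations of $\hat{\omega}_{\hat{t}}$ --- purely vertical ones, which move only the terminal value, and purely horizontal ones, which freeze the terminal value over a short time increment --- and then read off the sign conditions from the Dupire Taylor expansions defining $D_x u$, $D_{xx}^2 u$ and $D_t u$. This is the path-dependent analogue of the classical first- and second-order test at an interior maximum; the one genuine structural difference is that the horizontal direction is one-sided (time can only increase), which is exactly why the conclusion asks for $D_t u(\hat{\omega}_{\hat{t}})\leq 0$ rather than an equality.

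For the vertical derivatives I would take in (\ref{umax}) the degenerate endpoint $t=\hat{t}$: then $\hat{\omega}_{\hat{t}}\otimes\omega_{\hat{t}}$ depends on $\omega$ only through its terminal value, and choosing a continuation with $\omega(\hat{t})=\hat{\omega}(\hat{t})+x$ gives $\hat{\omega}_{\hat{t}}\otimes\omega_{\hat{t}}=\hat{\omega}_{\hat{t}}^{x}$. Since $\hat{\omega}_{\hat{t}}\in\Lambda_{Q}$ we have $\hat{\omega}(\hat{t})\in Q$, and as $Q$ is open every sufficiently small $x$ yields an admissible test path; hence $u(\hat{\omega}_{\hat{t}})\geq u(\hat{\omega}_{\hat{t}}^{x})$ for all small $x$, i.e. $x=0$ is an interior maximum of $x\mapsto u(\hat{\omega}_{\hat{t}}^{x})$. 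Substituting the second-order expansion from the definition of $D_x u$ and $D_{xx}^2 u$ and running the usual scaling argument --- replace $x$ by $\pm\varepsilon x$, divide by $\varepsilon$ and let $\varepsilon\to 0^{+}$ for the linear term, then divide by $\varepsilon^{2}$ for the quadratic term --- forces $D_x u(\hat{\omega}_{\hat{t}})=0$ and thereafter $D_{xx}^{2}u(\hat{\omega}_{\hat{t}})\leq 0$.

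For the horizontal derivative I would continue $\hat{\omega}_{\hat{t}}$ by the constant path equal to $\hat{\omega}(\hat{t})$ on $[\hat{t},\hat{t}+\delta]$; this continuation lies in $\Lambda_{Q}$ precisely because $\hat{\omega}(\hat{t})\in Q$, and it produces exactly $\hat{\omega}_{\hat{t},\delta}$. Thus (\ref{umax}) gives $u(\hat{\omega}_{\hat{t},\delta})-u(\hat{\omega}_{\hat{t}})\leq 0$ for every $\delta>0$, and inserting the defining relation $u(\hat{\omega}_{\hat{t},\delta})-u(\hat{\omega}_{\hat{t}})=D_t u(\hat{\omega}_{\hat{t}})\delta+o(\delta)$, dividing by $\delta>0$ and letting $\delta\to 0^{+}$, yields $D_t u(\hat{\omega}_{\hat{t}})\leq 0$.

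The individual steps are routine once the test paths are identified, so I do not expect a deep obstacle; the point requiring care is the bookkeeping about admissibility --- verifying that each perturbed path genuinely belongs to $\Lambda_{Q}$ with terminal time $\geq\hat{t}$, which rests entirely on the openness of $Q$ together with $\hat{\omega}(\hat{t})\in Q$. I would also keep firmly in mind that the horizontal test supplies only forward increments $\delta>0$, so that one cannot symmetrize to upgrade $D_t u(\hat{\omega}_{\hat{t}})\leq 0$ to an equality, whereas the vertical test is two-sided and therefore does pin down $D_x u(\hat{\omega}_{\hat{t}})=0$.
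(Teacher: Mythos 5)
Your proposal is correct and follows essentially the same route as the paper: the paper also tests (\ref{umax}) against the paths $(\hat{\omega}_{\hat{t}}^{x})_{\hat{t},t-\hat{t}}$, taking $x=0$ with $t>\hat{t}$ (one-sided in time) to get $D_{t}u(\hat{\omega}_{\hat{t}})\leq 0$ and $t=\hat{t}$ with small $x$ (two-sided interior maximum) to get $D_{x}u(\hat{\omega}_{\hat{t}})=0$ and $D_{xx}^{2}u(\hat{\omega}_{\hat{t}})\leq 0$. The only cosmetic difference is that the paper packages both perturbations into the single function $\bar{u}(t,x):=u((\hat{\omega}_{\hat{t}}^{x})_{\hat{t},t-\hat{t}})$ and invokes classical calculus, while you argue directly from the Dupire Taylor expansions.
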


\begin{proof}
We set $\omega(s)=x+\hat{\omega}(\hat{t})\mathbf{1}_{[\hat{t},t]}(s)$,
$s\in\lbrack0,t]$ and define
\[
\bar{u}(t,x):=u(\hat{\omega}_{\hat{t}}\otimes\omega_{t})=u((\hat{\omega}%
_{\hat{t}}^{x})_{\hat{t},t-\hat{t}}).
\]
For $x=0$, we derive from (\ref{umax}) condition that
\[
\bar{u}(\hat{t},0)\geq\bar{u}(t,0),\ \text{for }\ t\geq\hat{t},
\]
and thus $\frac{d}{dt}\bar{u}(\hat{t},0)\leq0$, or $D_{t}u(\hat{\omega}%
_{\hat{t}})\leq0$. For $t=\hat{t}$ we derive from (\ref{umax})\ $\bar{u}%
(\hat{t},0)\geq\bar{u}(\hat{t},x)$, for sufficiently small $x$, and thus
$D_{x}\bar{u}(\bar{t},0)=0$, $D_{xx}^{2}u(\hat{t},0)\leq0$, from which we have
the second and third relations of (\ref{MAXtx}).
\end{proof}

The following result is the so called comparison principle, or comparison
theorem, of PPDE for $\mathbb{C}^{1,2}$-solutions of (\ref{C2PPDE}).

\begin{theorem}
We make Assumption \textbf{(H1)}. Let $Q$ be a bounded open subset of
$\mathbb{R}^{d}$ and $u\in\mathbb{C}^{1,2}(\Lambda_{\bar{Q}})\cap USC_{\ast
}(\Lambda_{\bar{Q}})$ be a subsolution and $v\in\mathbb{C}^{1,2}(\Lambda
_{\bar{Q}})\cap LSC_{\ast}(\Lambda_{\bar{Q}})$ a supersolution of
(\ref{C2PPDE}). We also assume that $u-v$ is bounded from the above. Then the
maximum principle holds: if $u(\omega_{t})\leq v(\omega_{t})$ for all
$\omega_{t}\in\Lambda_{\partial Q}$, then we also have
\[
u(\omega_{t})\leq v(\omega_{t}),\ \ \ \forall\omega_{t}\in\Lambda_{Q}.\
\]

\end{theorem}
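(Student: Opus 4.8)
The plan is to argue by contradiction using the standard comparison-principle strategy, but with the key technical ingredient being the "left frozen maximization" of Lemma \ref{maxLemma} to locate a maximizing path in the non-compact path space, followed by the pointwise derivative information from Lemma \ref{max}. Suppose for contradiction that $\sup_{\omega_t \in \Lambda_Q}(u(\omega_t) - v(\omega_t)) =: M > 0$. Since $u - v$ is bounded above and $U := u - v \in USC_*(\Lambda_{\bar Q})$ (being the difference of a USC and an LSC function), I would first pick an approximate maximizer $\omega^{(0)}_{t_0}$ with $U(\omega^{(0)}_{t_0})$ close to $M$ and strictly positive, chosen in $\Lambda_{Q}$ rather than on the boundary $\Lambda_{\partial Q}$ (possible because $u \le v$ on $\Lambda_{\partial Q}$ forces $U \le 0$ there, whereas $M>0$).

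Next I would apply Lemma \ref{maxLemma} to the function $U$ and the starting path $\omega^{(0)}_{t_0}$ to obtain a path $\bar\omega_{\bar t} \in \Lambda_{\bar Q}$, with $\bar t \in [t_0, T]$, extending $\omega^{(0)}_{t_0}$, such that $U(\bar\omega_{\bar t}) \ge U(\omega^{(0)}_{t_0}) > 0$ and
\[
U(\bar\omega_{\bar t}) = \sup_{\gamma_t \in \Lambda_{\bar Q},\, t \ge \bar t} U(\bar\omega_{\bar t} \otimes \gamma_t).
\]
In particular $\bar\omega_{\bar t}$ maximizes $U$ over all right-extensions of itself. The maximizing endpoint $\bar\omega(\bar t)$ lies in $\bar Q$; I must rule out $\bar\omega(\bar t) \in \partial Q$ and $\bar t = T$ (otherwise $\bar\omega_{\bar t} \in \Lambda_{\partial Q}$ and $U(\bar\omega_{\bar t}) \le 0$, a contradiction), so $\bar\omega_{\bar t} \in \Lambda_Q$ with $\bar t \in [0,T)$, and the derivatives of $u$ and $v$ exist there.

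With $\bar\omega_{\bar t} \in \Lambda_Q$ as an interior maximizer of $U = u - v$ over right-extensions, I would invoke Lemma \ref{max} applied to $U$ to conclude
\[
D_t U(\bar\omega_{\bar t}) \le 0,\quad D_x U(\bar\omega_{\bar t}) = 0,\quad D^2_{xx} U(\bar\omega_{\bar t}) \le 0,
\]
that is $D_t u \le D_t v$, $D_x u = D_x v =: p$, and $D^2_{xx} u \le D^2_{xx} v$ at $\bar\omega_{\bar t}$. Writing $X := D^2_{xx}u(\bar\omega_{\bar t}) \le Y := D^2_{xx}v(\bar\omega_{\bar t})$ and using $u(\bar\omega_{\bar t}) \ge v(\bar\omega_{\bar t})$ (since $U > 0$ there), the monotonicity assumption \textbf{(H1)} gives $G(\bar\omega_{\bar t}, u, p, X) \le G(\bar\omega_{\bar t}, v, p, Y)$. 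Combining the subsolution inequality $D_t u + G(\cdots) \ge 0$ and supersolution inequality $D_t v + G(\cdots) \le 0$ with $D_t u \le D_t v$ should then force all these inequalities to be equalities, which I expect yields the contradiction $M \le 0$.

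\textbf{The main obstacle} I anticipate is the subtle interplay at the terminal time and boundary: verifying that the maximizing path $\bar\omega_{\bar t}$ genuinely lands in the interior $\Lambda_Q$ rather than on $\Lambda_{\partial Q}$, since Lemma \ref{maxLemma} only guarantees $\bar\omega(\bar t) \in \bar Q$ and $\bar t \in [t_0, T]$. The argument that $U \le 0$ on $\Lambda_{\partial Q}$ excludes the boundary endpoint, but the case $\bar t = T$ (included in $\Lambda_{\partial Q}$ via $\Lambda_{\bar Q_T}$) must be handled carefully, and I would need the horizontal and vertical derivatives of $u$ and $v$ to be available precisely at $\bar\omega_{\bar t}$, which requires $\bar t < T$. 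A clean separation of these boundary cases, possibly by a small perturbation of $U$ (e.g.\ subtracting $\varepsilon/(T-t)$ to penalize approaching $t = T$) to guarantee the maximum is attained strictly inside $[0,T) \times Q$, is where I expect the real care to be needed.
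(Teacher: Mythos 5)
Your skeleton---argue by contradiction, use Lemma \ref{maxLemma} to produce a path $\bar{\omega}_{\bar{t}}$ maximizing $u-v$ over its own right-extensions, exclude $\Lambda_{\partial Q}$ (and hence $\bar{t}=T$) because $u-v\le 0$ there while the maximal value is positive, apply Lemma \ref{max} to get $D_{t}u\le D_{t}v$, $D_{x}u=D_{x}v$, $D_{xx}^{2}u\le D_{xx}^{2}v$ at $\bar{\omega}_{\bar{t}}$, and invoke \textbf{(H1)}---is exactly the paper's. But your final step has a genuine gap: without a strictness device the argument produces only the chain $0\le D_{t}u+G(\bar{\omega}_{\bar{t}},u,p,X)\le D_{t}v+G(\bar{\omega}_{\bar{t}},v,p,Y)\le 0$, which forces all terms to equal zero and is perfectly consistent; it carries no information about $M$ and yields no contradiction. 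The subsolution inequality is one-sided ($\ge 0$), the supersolution inequality is one-sided ($\le 0$), and Lemma \ref{max} with \textbf{(H1)} only sandwiches one between the other, so "all inequalities become equalities" is a possible outcome, not an absurdity. This is precisely why comparison proofs first pass to a \emph{strict} subsolution.

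The paper supplies this missing ingredient at the outset: it replaces $u$ by $\tilde{u}:=u-\bar{\delta}/t$, which by \textbf{(H1)} (monotonicity in the $u$-variable) and $D_{t}(\bar{\delta}/t)=-\bar{\delta}/t^{2}$ satisfies $D_{t}\tilde{u}+G(\omega_{t},\tilde{u},D_{x}\tilde{u},D_{xx}\tilde{u})\ge\bar{\delta}/t^{2}\ge c:=\bar{\delta}/T^{2}>0$; one proves $\tilde{u}\le v$ and lets $\bar{\delta}\downarrow 0$. Then the same chain reads $0<c\le\cdots\le 0$, an actual contradiction. Note that your tentative remedy, subtracting $\varepsilon/(T-t)$, goes the \emph{wrong way} under this paper's backward (BSDE-type) convention, where a subsolution satisfies $D_{t}u+G\ge 0$ with data prescribed at $t=T$: it changes $D_{t}u$ by $-\varepsilon/(T-t)^{2}$, so the perturbed function need not be a subsolution at all. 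Moreover, the purpose you assign it (keeping the maximizer away from $t=T$) is already handled for free, since $\Lambda_{\partial Q}$ contains all time-$T$ paths and is excluded by the boundary hypothesis. With the paper's perturbation $-\bar{\delta}/t$ inserted at the start, the rest of your argument goes through essentially verbatim.
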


\begin{proof}
We observe that for $\bar{\delta}>0$, the function defined by $\tilde
{u}:=u-\bar{\delta}/t$ is a subsolution of
\[
D_{t}\tilde{u}(\omega_{t})+G(\omega_{t},\tilde{u}(\omega_{t}),D_{x}\tilde
{u}(\omega_{t}),D_{xx}\tilde{u}(\omega_{t}))\geq\frac{\bar{\delta}}{t^{2}}.
\]
Since $u\leq v$ follows from $\tilde{u}\leq v$ in the limit $\bar{\delta
}\downarrow0$, it suffices to prove the theorem under the additional
assumptions:
\begin{equation}%
\begin{array}
[c]{c}%
D_{t}u(\omega_{t})+G(\omega_{t},u(\omega_{t}),D_{x}u(\omega_{t}),D_{xx}%
^{2}u(\omega_{t}))\geq c,\ \ c:=\bar{\delta}/T^{2},\\
\ \ \ \ \ \ \ \ \ \ \ \ \ \text{and }\lim_{t\rightarrow0}u(\omega_{t}%
)=-\infty,\ \text{uniformly on }[0,T).
\end{array}
\label{inq-c}%
\end{equation}

Suppose by the contrary that there exists $\omega_{t_{0}}^{(0)}\in\Lambda_{Q}%
$, $t_{0}<T$, such that
\[
m_{0}:=u(\omega_{t_{0}}^{(0)})-v(\omega_{t_{0}}^{(0)})>0.
\]
Then, by Lemma \ref{maxLemma}, there exists $\bar{\omega}_{\bar{t}}\in
\Lambda_{Q}$ such that
\begin{equation}
u(\bar{\omega}_{\bar{t}})-v(\bar{\omega}_{\bar{t}})\geq\sup_{\gamma_{t}%
\in\Lambda_{Q},\ t\in\lbrack\bar{t},T]}u(\bar{\omega}_{\bar{t}}\otimes
\gamma_{t})-v(\bar{\omega}_{\bar{t}}\otimes\gamma_{t})\geq m_{0}.
\label{pathMax}%
\end{equation}
But by Lemma \ref{max},
\begin{align*}
D_{t}u(\bar{\omega}_{\bar{t}})-D_{t}v(\bar{\omega}_{\bar{t}})  &  \leq0,\\
D_{x}u(\bar{\omega}_{\bar{t}})-D_{x}v(\bar{\omega}_{\bar{t}})  &
=0,\ \ D_{xx}^{2}u(\bar{\omega}_{\bar{t}})-D_{xx}^{2}v(\bar{\omega}_{\bar{t}%
})\leq0.
\end{align*}
It follows that%
\begin{align*}
0  &  <c\leq D_{t}u(\bar{\omega}_{\bar{t}})+G(\bar{\omega}_{\bar{t}}%
,u(\bar{\omega}_{\bar{t}}),D_{x}u(\bar{\omega}_{\bar{t}}),D_{xx}^{2}%
u(\bar{\omega}_{\bar{t}}))\\
&  \leq D_{t}v(\bar{\omega}_{\bar{t}})+G(\bar{\omega}_{\bar{t}},v(\bar{\omega
}_{\bar{t}}),D_{x}v(\bar{\omega}_{\bar{t}}),D_{xx}^{2}v(\bar{\omega}_{\bar{t}%
}))\leq0.
\end{align*}
This induces a contradiction. The proof is complete.
\end{proof}

\section{Viscosity solutions for path-dependent PDE}

The notion viscosity solutions for classical (state-dependent) PDEs were
firstly introduced by Crandall and Lions [1981]. For many important
contributions in the developments of this powerful and elegant theory and rich
literature, we refer to the well-known user's guide by Crandall, Ishii and
Lions [1992]. A parabolic version of this theory quite helpful to understand
the present framework of path-dependent PDE can be found in the Appendix C of [Peng2010a].

Consider the following path-dependence parabolic PDE: to find a function
$u=u(\omega_{t})\in$ $USC_{\ast}(\Lambda_{\bar{Q}})$ such that
\begin{equation}
\left\{
\begin{array}
[c]{l}%
D_{t}u(\omega_{t})+G(u(\omega_{t}),D_{x}u(\omega_{t}),D_{xx}^{2}u(\omega
_{t}))=0\ \text{on }\omega_{t}\in\Lambda_{Q},\\
\ u(\omega_{T})=\Phi(\omega_{T})\ \text{for}\
\omega_{T}\in\Lambda_{\partial Q},
\end{array}
\right.  \label{PE-G}%
\end{equation}
\ where $G:\mathbb{R}\times\mathbb{R}^{d}\times\mathbb{S}(d)\mapsto\mathbb{R}$
and $\Phi\in USC_{\ast}(\Lambda_{\partial Q}):\Lambda_{\partial Q_{T}%
}\mapsto\mathbb{R}$ are given functions. We make the following assumption:

\begin{description}
\item[(H2)] Suppose that $G\in\mathbb{C}(\mathbb{R}\times\mathbb{R}^{d}%
\times\mathbb{S}(d))$ is a continuous function satisfying the following
condition:
\begin{equation}
G(u,p,X)\geq G(v,p,Y)\ \ \text{whenever}\ X\geq Y,\ \ u\leq v. \label{DE}%
\end{equation}

\end{description}

We now generalize the definition of viscosity solutions to the situation of PPDE.

Let $u\in USC(\Lambda_{\bar{Q}})$, we denote by $P^{2,+}u(\omega_{t})$ (the
\textquotedblleft\textbf{parabolic superjet}\textquotedblright\ of $u$ at a
given $\omega_{t}\in\Lambda_{Q}$, $t\in\lbrack0,T)$) the set of triples
$(a,p,X)\in\mathbb{R}\times\mathbb{R}^{d}\times\mathbb{S}(d)$ such that%
\[
u((\omega_{t}^{x})_{t,\delta})\leq u(\omega_{t})+a\delta+\left\langle
p,x\right\rangle +\frac{1}{2}\left\langle Xx,x\right\rangle +o(\delta
+|x|^{2}),\ \
\]
for each $\delta\geq0$ and $x\in\mathbb{R}^{d}$ such that $\omega(t)+x\in Q$.
It is easy to check that if $\varphi\in\mathbb{C}^{1,2}(\Lambda_{Q})$
satisfies
\[
u(\omega_{t})=\varphi(\omega_{t}),\ \ u((\omega_{t}^{x})_{t,\delta}%
)\leq\varphi((\omega_{t}^{x})_{t+\delta}),
\]
then $(D_{t}\varphi(\omega_{t}),D_{x}\varphi(\omega_{t}),D_{xx}\varphi
(\omega_{t}))\in P^{2,+}u(\omega_{t})$.

We also denote%
\begin{align*}
\bar{P}^{2,+}u(\omega_{t})=  &  \{(a,p,X)\in\mathbb{R}\times\mathbb{R}%
^{d}\times\mathbb{S}(d):\exists(\omega_{t_{n}}^{n},a_{n},p_{n},X_{n})\\
&  \ \ \text{such that}\ (a_{n},p_{n},X_{n})\in P^{2,+}u(\omega_{t_{n}}%
^{n})\ \text{and}\ \\
&  \ \ (\omega_{t_{n}}^{n},u(\omega_{t_{n}}^{n}),a_{n},p_{n},X_{n}%
)\rightarrow(\omega_{t},u(\omega_{t}),a,p,X)\}.
\end{align*}
We define $P^{2,-}u(\omega_{t})$ (the \textquotedblleft\textbf{parabolic
subjet}\textquotedblright\ of $u$ at $\omega_{t}$) by $P^{2,-}u(\omega
_{t})=-P^{2,+}(-u)(\omega_{t})$ and $P^{2,-}u(\omega_{t})$ by $\bar{P}%
^{2,-}u(\omega_{t})=-\bar{P}^{2,+}(-u)(\omega_{t})$.

\begin{definition}
A viscosity subsolution (resp. supersolution) of (\ref{PE-G}) on
$\Lambda_{\bar{Q}}$ is a function $u\in USC_{\ast}(\Lambda_{\bar{Q}})$ such
that for each fixed $\omega_{t}\in\Lambda_{Q}$, $t\in\lbrack0,T)$ and for each
$(a,p,X)\in P^{2,+}u(\omega_{t})$, (resp. $(a,p,X)\in P^{2,-}u(\omega_{t})$)
we have\ {}%
\begin{equation}
a+G(u(\omega_{t}),p,X)\geq0\ \ \text{(resp. }\leq0\text{). }\ \label{eq2.4}%
\end{equation}
$u$ is a viscosity solution if it is both sub and supersolution.
\end{definition}

\begin{remark}
Since $G$ is a continuous function, thus (\ref{eq2.4}) holds also for
$(a,p,X)\in\bar{P}^{2,+}u(\omega_{t})$ (resp. $(a,p,X)\in\bar{P}^{2,-}%
u(\omega_{t})$).
\end{remark}

\begin{remark}
If $\psi\in\mathbb{C}^{1,2}(\Lambda_{Q})$, $\hat{\omega}_{\hat{t}}\in\Lambda$,
$\hat{t}\in(0,T)$, $\psi(\hat{\omega}_{\hat{t}})=u(\hat{\omega}_{\hat{t}})$
are such that
\begin{align*}
\psi(\hat{\omega}_{\hat{t}}\otimes\gamma_{t})  &  \geq u(\hat{\omega}_{\hat
{t}}\otimes\gamma_{t}),\ \ \text{(resp. }\psi(\hat{\omega}_{\hat{t}}%
\otimes\gamma_{t})\leq u(\hat{\omega}_{\hat{t}}\otimes\gamma_{t}%
)\text{)\ \ }\\
\forall\gamma_{t}  &  \in\Lambda_{Q},\ t\in\lbrack\hat{t},T],
\end{align*}
then we have $(D_{t}\psi(\hat{\omega}_{\hat{t}}),D\psi(\hat{\omega}_{\hat{t}%
}),D^{2}\psi(\hat{\omega}_{\hat{t}}))\in P^{2,+}u(\hat{\omega}_{\hat{t}})$
(resp. $P^{2,-}u(\hat{\omega}_{\hat{t}})$).
\end{remark}

\begin{remark}
If $G=G(u,p)$ then (\ref{PE-G}) becomes a first order path-dependent
PDE:
\begin{equation}
\left\{
\begin{array}
[c]{l}%
D_{t}u(\omega_{t})+G(u(\omega_{t}),D_{x}u(\omega_{t}))=0\ \text{on }\omega_{t}\in\Lambda_{Q},\\
\ u(\omega_{T})=\Phi(\omega_{T})\ \text{for}\
\omega_{T}\in\Lambda_{\partial Q},
\end{array}
\right.  %
\end{equation}
In this case we can similarly define $P^{1,+}u(\omega_{t})$ (resp.
$P^{1,-}u(\omega_{t})$), $\bar{P}^{1,+}u(\omega_{t})$ (resp.
$\bar{P}^{1,-}u(\omega_{t})$) and use them to give the notion  the
corresponding viscosity solution.
\end{remark}

\section{Comparison principle for viscosity solution of 2nd order path-dependent PDE}

In this section we show that the approach of left frozen maximization
introduced in the proof of the comparison principle for smooth solutions of
PPDE can be also applied for the situation of viscosity solutions.

The following lemma is from Theorem 8.2 of [CIL1992]. See also
[Cradall1989].

\begin{lemma}
\label{Thm-8.3}Let $u_{i}\in$USC$((0,T)\times Q)$ for $i=1,\cdots,k$ be given.
Let $\varphi$ be a function defined on $(0,T)\times Q^{\otimes k}$ such that
$(t,x_{1},\ldots,x_{k})\rightarrow\varphi(t,x_{1},\ldots,x_{k})$ is once
continuously differentiable in $t$ and twice continuously differentiable in
$(x_{1},\cdots,x_{k})\in Q^{\otimes k}$. Suppose that $\hat{t}\in\lbrack0,T)$,
$\hat{x}_{i}\in\mathbb{R}^{d}$ for $i=1,\cdots,k$ and
\[
w(t,x_{1},\cdots,x_{k}):=u_{1}(t,x_{1})+\cdots+u_{k}(t,x_{k})-\varphi
(t,x_{1},\cdots,x_{k})
\]%
\begin{equation}
w(t,x_{1},\cdots,x_{k})\leq w(\hat{t},\hat{x}_{1},\cdots,\hat{x}_{k}%
),\ \ t\in\lbrack\hat{t},T),\ \ x_{i}\in Q \label{eq8.4}%
\end{equation}
Assume, moreover, that there exists $r>0$ such that for every $M>0$ there
exists constant $C$ such that for $i=1,\cdots,k$,%
\begin{equation}%
\begin{array}
[c]{ll}
& b_{i}\leq C\text{ whenever \ }(b_{i},q_{i},X_{i})\in\mathcal{P}^{2,+}%
u_{i}(t,x_{i}),\\
& |x_{i}-\hat{x}_{i}|+|t-\hat{t}|\leq r\text{ and }|u_{i}(t,x_{i}%
)|+|q_{i}|+\left\Vert X_{i}\right\Vert \leq M.
\end{array}
\label{eq8.5}%
\end{equation}
Then, there exist $b_{1},\cdots,b_{k}\in\mathbb{R}$, such that \newline%
\textsl{(i)} $(b_{i},D_{x_{i}}\varphi(\hat{t},\hat{x}_{1},\cdots,\hat{x}%
_{k}),X_{i})\in\mathcal{\bar{P}}^{2,+}u_{i}(\hat{t},\hat{x}_{i}%
),\ \ i=1,\cdots,k,$\newline\textsl{(ii)} $b_{1}+\cdots+b_{k}\leq\partial
_{t}\varphi(\hat{t},\hat{x}_{1},\cdots,\hat{x}_{k}),$\newline\textsl{(iii)}
\[
-(\frac{1}{\varepsilon}+\left\Vert A\right\Vert )I\leq\left[
\begin{array}
[c]{ccc}%
X_{1} & \cdots & 0\\
\vdots & \ddots & \vdots\\
0 & \cdots & X_{k}%
\end{array}
\right]  \leq A+\varepsilon A^{2},
\]
\newline where $A=D_{x}^{2}\varphi(\hat{x})\in\mathbb{S}(kd)$.
\end{lemma}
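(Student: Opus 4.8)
The plan is to follow the classical strategy for the ``theorem on sums'' (the maximum principle for semicontinuous functions), adapting it to the parabolic setting of the statement. The central difficulty is that the $u_i$ are only upper semicontinuous, so they cannot be differentiated directly, and the entire argument is a regularization-and-limit procedure. First I would regularize each $u_i$ by a sup-convolution in the spatial variables: for small $\lambda>0$ set
\[
\hat{u}_i^{\lambda}(t,x):=\sup_{y\in Q}\Bigl\{\,u_i(t,y)-\tfrac{1}{2\lambda}|x-y|^2\,\Bigr\}.
\]
Each $\hat{u}_i^{\lambda}$ is semiconvex in $x$ (the function $\hat{u}_i^{\lambda}+\tfrac{1}{2\lambda}|x|^2$ is convex, so $D_x^2\hat{u}_i^{\lambda}\geq -\tfrac{1}{\lambda}I$ wherever it exists), it dominates $u_i$ and decreases to it as $\lambda\downarrow 0$, and its spatial superjets are controlled by those of $u_i$ at a nearby point. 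The role of hypothesis \eqref{eq8.5} is precisely to keep the time-components $b_i$ of the jets bounded above under this procedure, so that the regularized problem retains a usable maximum near $(\hat t,\hat x_1,\dots,\hat x_k)$ and the parabolic (time-degenerate) version does not collapse.

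Next I would invoke Jensen's lemma. After perturbing $\varphi$ by a strictly concave quadratic so that the maximum in \eqref{eq8.4} becomes strict, the regularized function $\hat w=\sum_i\hat{u}_i^{\lambda}-\varphi$ is semiconvex and attains a near-maximum at points close to $(\hat t,\hat x_1,\dots,\hat x_k)$. Jensen's lemma then guarantees that the set of points at which $\hat w$ is twice differentiable and the slightly tilted function $\hat w(\cdot)-\langle p,\cdot\rangle$ has an interior maximum for small $p$ has positive Lebesgue measure. Since by Alexandrov's theorem every semiconvex $\hat{u}_i^{\lambda}$ is twice differentiable almost everywhere, I can select a common point $(\tilde t,\tilde x_1,\dots,\tilde x_k)$ at which all the $\hat{u}_i^{\lambda}$ admit second-order Taylor expansions and $\hat w$ has a local maximum.

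At such a point the first-order condition gives $D_{x_i}\hat{u}_i^{\lambda}=D_{x_i}\varphi$, the one-sided time condition (the maximum sits at the left endpoint $\tilde t$) gives $\sum_i\partial_t\hat{u}_i^{\lambda}\leq\partial_t\varphi$, and the spatial second-order condition $D^2\hat w\leq 0$ yields the block inequality $\mathrm{diag}(X_i^{\lambda})\leq A$ with $X_i^{\lambda}=D_{x_i}^2\hat{u}_i^{\lambda}$ and $A=D_x^2\varphi$. Transferring these jets back from $\hat{u}_i^{\lambda}$ to $u_i$ produces elements of $\mathcal{P}^{2,+}u_i$ at points converging to $(\hat t,\hat x_i)$; the uniform bound \eqref{eq8.5} forces the $b_i$ to converge, so the limits land in the closure $\mathcal{\bar P}^{2,+}u_i(\hat t,\hat x_i)$, giving (i), while summing the time components yields (ii).

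The last and most delicate step is to upgrade the crude bound $\mathrm{diag}(X_i)\leq A$ into the two-sided estimate (iii), with the quadratic correction $A+\varepsilon A^2$ on top and the lower bound $-(\varepsilon^{-1}+\|A\|)I$ below. This is a purely algebraic matrix lemma: the lower bound is inherited from the semiconvexity constant $-\tfrac{1}{\lambda}$ of the sup-convolutions, the upper bound comes from the maximum principle, and one balances $\lambda$ against $\varepsilon$ so that $-(\varepsilon^{-1}+\|A\|)I\leq \mathrm{diag}(X_i)\leq A+\varepsilon A^2$ holds exactly. I expect this reconciliation of the two parameters --- extracting precisely the $\varepsilon A^2$ term rather than a cruder bound --- to be the main obstacle, as it is where the estimate is sharpened beyond the naive maximum-principle inequality. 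I would then refine along $\lambda\downarrow 0$ to conclude.
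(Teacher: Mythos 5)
The first thing to say is that the paper contains no proof of this lemma: it is stated as a known result, quoted from the Crandall--Ishii--Lions user's guide \cite{CIL} (the paper points to Theorem 8.2 there; see also \cite{Crandall1989}), and is then used as a black box in the proof of Theorem \ref{Thm-dom1}. So there is no argument in the paper to compare yours against; what you have written is an attempt to reconstruct the Crandall--Ishii proof of the parabolic ``theorem on sums'', and the ingredients you list --- sup-convolution, Jensen's lemma, Alexandrov's theorem, and the matrix balancing that produces the $A+\varepsilon A^{2}$ bound --- are indeed the right ones for that program.

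Your sketch does, however, have a genuine gap, and it sits exactly where the parabolic statement differs from the elliptic one: the time variable. You regularize only in space and then assert that, at the selected point, the one-sided maximum in $t$ yields $\sum_{i}\partial_{t}\hat{u}_{i}^{\lambda}\leq\partial_{t}\varphi$. But spatial sup-convolution creates no regularity in $t$ whatsoever: each $\hat{u}_{i}^{\lambda}$ is still merely upper semicontinuous in time, so $\partial_{t}\hat{u}_{i}^{\lambda}$ need not exist, and Jensen's lemma and Alexandrov's theorem cannot be invoked in the joint variables $(t,x)$ because $\hat{w}$ is not semiconvex in $t$. Nor can you escape by treating $t$ as one more space variable and quoting the elliptic theorem on sums: $\varphi$ is only $C^{1}$ in $t$, the maximum in \eqref{eq8.4} is one-sided in $t$, and the conclusion demands parabolic jets, i.e.\ first order in time. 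This is precisely why the parabolic version has a separate, more technical proof in the literature, and why hypothesis \eqref{eq8.5} appears at all: it has no elliptic counterpart, and its role is not the bookkeeping one you assign it (keeping limits of already-constructed $b_{i}$ bounded) but the structural one of making the time components of the approximate jets exist and be compact, since no first-order condition in $t$ is available to pin them down. As written, your argument proves (a version of) the elliptic Theorem 3.2 of \cite{CIL}; the missing time-regularization and limiting step is the actual content of the lemma.
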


\begin{theorem}
\textbf{\label{Thm-dom1}}(Comparison principle) We assume \textbf{(H2)}. Let
$Q$ be a bounded open subset of $\mathbb{R}^{d}$ and $u\in$\textrm{USC}%
$_{\ast}(\Lambda_{\bar{Q}})$ (resp. $v\in$\textrm{LSC}$_{\ast}(\Lambda
_{\bar{Q}})$) be a viscosity subsolution (resp. supersolution) of
\begin{equation}
D_{t}u+G(u(\omega_{t}),D_{x}u(\omega_{t}),D_{xx}^{2}u(\omega_{t}%
))=0.\ \ \label{visPDE0}%
\end{equation}
Assume that $u-v$ is bounded from above by $C$ and they satisfy
\begin{equation}
|u(\omega_{t})-u(\upsilon_{s})|\vee|v(\omega_{t})-v(\upsilon_{s})|\leq
\rho(d(\omega_{t},\upsilon_{s})), \label{u-Lip}%
\end{equation}
where $\rho:[0,\infty)\mapsto\mathbb{R}$ is a given continuous and increasing
function with $\rho(0)=0$ and
\[
d(\omega_{t},\upsilon_{s}):=|\omega(t)-\upsilon(s)|^{2}+\int_{0}^{t\vee
s}|\omega(r\wedge t)-\bar{\omega}(r\wedge s)|^{2}dr.
\]
We also assume that, for each $\hat{\omega}_{\hat{t}}\in\Lambda_{Q}$, the
functions
\[
\hat{u}(t,x):=u((\hat{\omega}_{\hat{t}}^{x})_{\hat{t},t-\hat{t}}),\ \ \hat
{v}(t,x):=v((\hat{\omega}_{\hat{t}}^{x})_{\hat{t},t-\hat{t}}),\ t\in
\lbrack\hat{t},T),\ x+\hat{\omega}(\hat{t})\in Q,\
\]
satisfy%
\begin{equation}
\bar{P}^{2,+}\hat{u}(\hat{t},0)\subset\bar{P}^{2,+}u(\hat{\omega}_{\hat{t}%
}),\ \ \ \ \bar{P}^{2,-}\hat{v}(\hat{t},0)\subset\bar{P}^{2,-}v(\hat{\omega
}_{\hat{t}}). \label{u-v}%
\end{equation}
Then the following comparison holds: if $u(\omega_{t})\leq v(\omega_{t})$ for
$\omega_{t}\in\Lambda_{\partial Q_{T}}$, then $u(\omega_{t})\leq v(\omega
_{t})$ for $\omega_{t}\in\Lambda_{Q}$.
\end{theorem}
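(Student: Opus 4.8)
The plan is to run the same contradiction scheme as for the $\mathbb{C}^{1,2}$ comparison theorem, but to replace the evaluation of $D_t,D_x,D^2_{xx}$ at the maximizing path by the Crandall--Ishii--Lions theorem on sums (Lemma \ref{Thm-8.3}) applied after a doubling of the spatial variable. First I would carry out the reduction used in the smooth case: replacing $u$ by $\tilde u:=u-\bar\delta/t$ turns every $(a,p,X)\in\bar P^{2,+}u(\omega_t)$ into $(a-\bar\delta/t^2,p,X)\in\bar P^{2,+}u(\omega_t)$ and, via \textbf{(H2)}, makes $\tilde u$ a \emph{strict} subsolution, $a+G(\tilde u(\omega_t),p,X)\ge c:=\bar\delta/T^2>0$, with $\lim_{t\to0}\tilde u=-\infty$; this shift is smooth in $t$ alone, hence harmless for (\ref{u-v}). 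Assuming for contradiction that $u(\omega^{(0)}_{t_0})-v(\omega^{(0)}_{t_0})=m_0>0$, I apply Lemma \ref{maxLemma} to the bounded-above, $\Lambda$-USC function $u-v$ and obtain $\bar\omega_{\bar t}$ extending $\omega^{(0)}_{t_0}$ with $u(\bar\omega_{\bar t})-v(\bar\omega_{\bar t})\ge m_0$ equal to $\sup_{\gamma_t,\,t\ge\bar t}[u(\bar\omega_{\bar t}\otimes\gamma_t)-v(\bar\omega_{\bar t}\otimes\gamma_t)]$. Since $u\le v$ on $\Lambda_{\partial Q}$, the maximizer cannot have $\bar\omega(\bar t)\in\partial Q$ or $\bar t=T$, so $\bar\omega_{\bar t}\in\Lambda_Q$.

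Next I freeze the history $\bar\omega_{\bar t}$ and pass to the finite-dimensional cross-sections $\hat u(t,x),\hat v(t,x)$ from (\ref{u-v}), defined on $[\bar t,T)\times\{x:\bar\omega(\bar t)+x\in Q\}$. The constant extensions $(\bar\omega_{\bar t}^{x})_{\bar t,t-\bar t}$ are admissible $\gamma_t$, so the maximality above yields $\hat u(t,x)-\hat v(t,x)\le\hat u(\bar t,0)-\hat v(\bar t,0)$, i.e. $\hat u-\hat v$ is maximized at the interior base point $(\bar t,0)$. I then double the space variable: let $(t_\epsilon,x_\epsilon,y_\epsilon)$ maximize $\hat u(t,x)-\hat v(t,y)-\tfrac{1}{2\epsilon}|x-y|^2$ over $t\ge\bar t$. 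The modulus (\ref{u-Lip}) gives $|x_\epsilon-y_\epsilon|^2/\epsilon\to0$ and keeps the maximizers in a compact interior region. Applying Lemma \ref{Thm-8.3} with $u_1=\hat u$, $u_2=-\hat v$, $\varphi=\tfrac{1}{2\epsilon}|x-y|^2$ and $\hat t=t_\epsilon$ (its hypothesis (\ref{eq8.5}) to be checked from the sub/supersolution inequalities and continuity of $G$) produces $(b_1,p_\epsilon,X_1)\in\bar P^{2,+}\hat u(t_\epsilon,x_\epsilon)$ and $(\tilde b_2,p_\epsilon,Y)\in\bar P^{2,-}\hat v(t_\epsilon,y_\epsilon)$ with $p_\epsilon=\tfrac1\epsilon(x_\epsilon-y_\epsilon)$, $X_1\le Y$ (from the matrix inequality), and $b_1\le\tilde b_2$ (since $\partial_t\varphi=0$).

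Granting the transfer of these jets to path jets (the issue addressed below), (\ref{u-v}) places $(b_1,p_\epsilon,X_1)$ in $\bar P^{2,+}u$ and $(\tilde b_2,p_\epsilon,Y)$ in $\bar P^{2,-}v$ at the corresponding maximizing paths. The viscosity inequalities then read $b_1+G(u,p_\epsilon,X_1)\ge c$ and $\tilde b_2+G(v,p_\epsilon,Y)\le0$. Subtracting and using $b_1\le\tilde b_2$, together with $X_1\le Y$ and $u\ge v$ at the maximizer, assumption \textbf{(H2)} forces $G(u,p_\epsilon,X_1)\le G(v,p_\epsilon,Y)$, whence $c\le0$, the desired contradiction.

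The hard part will be precisely this jet transfer. A superjet of the held-constant cross-section $\hat u$ at $(t_\epsilon,x_\epsilon)$ perturbs the path on the whole segment $[\bar t,t_\epsilon)$, whereas the path superjet entering (\ref{u-v}) perturbs only the tail; by (\ref{u-Lip}) the two descriptions differ by $\rho((t_\epsilon-\bar t)|x|^2)$, which fails to be $o(|x|^2)$ unless $t_\epsilon=\bar t$. Thus the crux is to force the penalized maximizer to concentrate at the frozen time $\bar t$ (vanishing holding duration), so that the finite-dimensional jet of $\hat u$ coincides with the path jet appearing in (\ref{u-v}). I would attempt this either by re-applying the left-frozen maximization of Lemma \ref{maxLemma} to the penalized difference on the product path space (so the attained maximum carries zero extension), or by adding a vanishing time penalty and controlling the resulting error through the distance $d$ and the modulus $\rho$. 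This is exactly the role of condition (\ref{u-v}), which, as the author remarks, remains to be improved.
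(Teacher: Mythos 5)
Your argument runs the two essential steps in the wrong order, and the gap you flag at the end is fatal to the proof as written, not a technicality left to check. After applying Lemma \ref{maxLemma} to $u-v$ and freezing the history $\bar{\omega}_{\bar{t}}$, you double the space variable only in the finite-dimensional cross-sections, maximizing $\hat{u}(t,x)-\hat{v}(t,y)-\frac{1}{2\epsilon}|x-y|^{2}$. The resulting maximizer $(t_{\epsilon},x_{\epsilon},y_{\epsilon})$ is in general not the base point $(\bar{t},0,0)$, while condition (\ref{u-v}) converts finite-dimensional jets into path jets \emph{only at base points}: it is stated as $\bar{P}^{2,+}\hat{u}(\hat{t},0)\subset\bar{P}^{2,+}u(\hat{\omega}_{\hat{t}})$, i.e.\ at time $\hat{t}$ and displacement $x=0$. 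At a point with $t_{\epsilon}>\bar{t}$ or $x_{\epsilon}\neq0$, a jet of the cross-section $\hat{u}$ corresponds to perturbing the path on the whole held interval $[\bar{t},t_{\epsilon}]$, whereas a path superjet perturbs only the terminal value; as you yourself observe, the discrepancy is of order $\rho((t_{\epsilon}-\bar{t})|x|^{2})$, which is not $o(|x|^{2})$, and nothing forces $t_{\epsilon}=\bar{t}$, $x_{\epsilon}=y_{\epsilon}=0$ exactly for finite $\epsilon$. So the contradiction cannot be closed along your main line, and your preliminary application of Lemma \ref{maxLemma} to $u-v$ ends up being wasted.

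The repair is exactly your fallback (a), but it must be the first step, not a patch: the paper doubles at the path level \emph{before} any maximization. One sets $w_{\alpha}(\omega_{t}^{1},\omega_{t}^{2}):=u(\omega_{t}^{1})-v(\omega_{t}^{2})-\varphi_{\alpha}(\omega_{t}^{1},\omega_{t}^{2})$ with the path-norm penalty $\varphi_{\alpha}=\frac{\alpha}{2}\bigl(|\omega^{1}(t)-\omega^{2}(t)|^{2}+\int_{0}^{t}|\omega^{1}(s)-\omega^{2}(s)|^{2}ds\bigr)$, and applies Lemma \ref{maxLemma} to $w_{\alpha}$ on the product path space $\Lambda_{Q\times Q}$ (both components sharing one time variable). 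The modulus (\ref{u-Lip}) and the bound $C$ show that any near-maximizer satisfies $\frac{\alpha}{2}\left\Vert \omega_{t}^{1}-\omega_{t}^{2}\right\Vert ^{2}\leq c/2$ and stays in $\Lambda_{Q\times Q}$ away from the boundary. Because the left-frozen maximization is performed on the already-penalized functional, the induced finite-dimensional doubled function $\bar{u}(t,x)-\bar{v}(t,y)-\bar{\varphi}_{\alpha}(t,x,y)$ attains its maximum exactly at the base point $(\bar{t},0,0)$, so Lemma \ref{Thm-8.3} can be applied there and condition (\ref{u-v}) legitimately transfers both jets. One further point your sketch gets wrong: the correct penalty is horizontally nonconstant (the integral term), so the theorem on sums yields $b_{1}-b_{2}\leq\partial_{t}\bar{\varphi}_{\alpha}=\frac{\alpha}{2}|\bar{\omega}^{1}(\bar{t})-\bar{\omega}^{2}(\bar{t})|^{2}\leq c/2$ rather than $b_{1}\leq\tilde{b}_{2}$; this $c/2$ is precisely what is absorbed by the strictness constant $c$ coming from your (correct) reduction $u\mapsto u-\bar{\delta}/t$. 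Your reduction and your diagnosis of the obstruction are both right; what is missing is the realization that path-level doubling with the path-norm penalty, followed by left-frozen maximization, is what pins the finite-dimensional maximum to the base point where (\ref{u-v}) is usable.
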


We first observe that for $\bar{\delta}>0$, the functions defined by
$\tilde{u}_{i}:=u_{i}-\bar{\delta}/t$ is a subsolution of
\[
D_{t}\tilde{u}(\omega_{t})+G(\tilde{u}(\omega_{t}),D_{x}\tilde{u}(\omega
_{t}),D_{xx}\tilde{u}(\omega_{t}))=\frac{\bar{\delta}}{t^{2}},
\]
Since $u\leq v$ follows from $\tilde{u}\leq v$ in the limit $\bar{\delta
}\downarrow0$, it suffices to prove the theorem under the additional
assumptions:
\begin{equation}
D_{t}u(\omega_{t})+G(u(\omega_{t}),D_{x}u(\omega_{t}),D_{xx}^{2}u(\omega
_{t}))\geq c,\ \ c:=\bar{\delta}/T^{2}. \label{ineq-c}%
\end{equation}

To prove this theorem, for $\omega_{t}^{1},\omega_{t}^{2}\in\Lambda$, we set%
\begin{align*}
w_{\alpha}(\omega_{t}^{1},\omega_{t}^{2})  &  :=u(\omega_{t}^{1})-v(\omega
_{t}^{2})-\varphi_{\alpha}(\omega_{t}),\ \ \ \omega=(\omega^{1},\omega^{2}),\\
\varphi_{\alpha}(\omega_{t}^{1},\omega_{t}^{2})  &  :=\frac{\alpha}%
{2}\left\Vert \omega_{t}^{1}-\omega_{t}^{2}\right\Vert ^{2}\\
&  =\frac{\alpha}{2}|\omega^{1}(t)-\omega^{2}(t)|^{2}+\frac{\alpha}{2}\int%
_{0}^{t}|\omega^{1}(s)-\omega^{2}(s)|^{2}ds.
\end{align*}

\begin{proof}
[Proof of Theorem \ref{Thm-dom1}]The proof is still based on our
\textquotedblleft left frozen maximization\textquotedblright\ approach. To
prove the theorem, we assume to the contrary that there exists $\tilde{\omega
}_{\tilde{t}}\in\Lambda_{Q}$, $\tilde{t}\in(0,T)$, such that $\tilde
{m}:=u(\tilde{\omega}_{\tilde{t}})-v(\tilde{\omega}_{\tilde{t}})>0$. \ Let
$\alpha$ be a large number such that
\[
\frac{1}{\alpha}+\rho(\frac{2}{\alpha}(\frac{1}{\alpha}+2C-M_{\ast}))\leq
\frac{1}{2}(\tilde{m}\wedge c).
\]
We set%
\[
M_{\alpha}:=\sup_{\omega_{t}\in\Lambda_{Q\times Q}\ }w_{\alpha}(\omega
_{t})\geq M_{\ast}:=\sup_{\bar{\omega}_{t}\in\Lambda_{Q}\ }[u(\bar{\omega}%
_{t})-v(\bar{\omega}_{t})]\geq\tilde{m}.
\]
First let us check that if $\omega_{t}=(\omega_{t}^{1},\omega_{t}^{2}%
)\in\Lambda_{\bar{Q}\times\bar{Q}}$ satisfies $w_{\alpha}(\omega_{t})+\frac
{1}{\alpha}\geq M_{\alpha}$. Then we have
\begin{align*}
\frac{\alpha}{2}\left\Vert \omega_{t}^{1}-\omega_{t}^{2}\right\Vert ^{2}  &
\leq\frac{1}{\alpha}+u(\omega_{t}^{1})-v(\omega_{t}^{2})-M_{\alpha}\\
&  \leq\frac{1}{\alpha}+u(\omega_{t}^{1})-v(\omega_{t}^{2})-M_{\ast}\\
&  \leq\frac{1}{\alpha}+2C-M_{\ast}.
\end{align*}
But we also have
\begin{align*}
M_{\ast}  &  \leq\frac{1}{\alpha}+[u(\omega_{t}^{1})-u(\omega_{t}%
^{2})]+[u(\omega_{t}^{2})-v(\omega_{t}^{2})]-\frac{\alpha}{2}\left\Vert
\omega_{t}^{1}-\omega_{t}^{2}\right\Vert ^{2}\\
&  \leq\frac{1}{\alpha}+\rho(\left\Vert \omega_{t}^{1}-\omega_{t}%
^{2}\right\Vert ^{2})+[u(\omega_{t}^{2})-v(\omega_{t}^{2})]-\frac{\alpha}%
{2}\left\Vert \omega_{t}^{1}-\omega_{t}^{2}\right\Vert ^{2}\\
&  \leq\frac{1}{\alpha}+\rho(\frac{2}{\alpha}(\frac{1}{\alpha}+2C-M_{\ast
}))+M_{\ast}-\frac{\alpha}{2}\left\Vert \omega_{t}^{1}-\omega_{t}%
^{2}\right\Vert ^{2}.
\end{align*}
Thus
\[
\frac{\alpha}{2}\left\Vert \omega_{t}^{1}-\omega_{t}^{2}\right\Vert ^{2}%
\leq\frac{1}{\alpha}+\rho(\frac{2}{\alpha}(\frac{1}{\alpha}+2C-M_{\ast}%
))\leq\frac{c}{2}.
\]
\ Moreover $\omega_{t}^{1}$, $\omega_{t}^{2}\in\Lambda_{Q}$. Indeed if,
$\omega_{t}^{1}$ or $\omega_{t}^{2}\in\Lambda_{\partial Q}$, then we will
deduce the following contradiction:
\begin{align*}
\tilde{m}  &  \leq M_{\ast}\leq M_{\alpha}\\
&  \leq\frac{1}{\alpha}+[u(\omega_{t}^{1})-u(\omega_{t}^{2})]+u(\omega_{t}%
^{2})-v(\omega_{t}^{2})\\
&  \leq\frac{1}{\alpha}+[u(\omega_{t}^{1})-u(\omega_{t}^{2})]\\
&  \leq\frac{1}{\alpha}+\rho(\frac{2}{\alpha}(\frac{1}{\alpha}+2C-M_{\ast
}))\leq\frac{\tilde{m}}{2}.
\end{align*}

Now we can apply Lemma \ref{maxLemma} to find $\bar{\omega}_{\bar{t}}%
=(\bar{\omega}_{\bar{t}}^{1},\bar{\omega}_{\bar{t}}^{2})\in\Lambda_{Q\times
Q}$ satisfying $\bar{\omega}_{\bar{t}}=\omega_{t}\otimes\bar{\omega}_{\bar{t}%
}$, $w_{\alpha}(\bar{\omega}_{\bar{t}})\geq w_{\alpha}(\omega_{t})$ such that
\begin{equation}
w_{\alpha}(\bar{\omega}_{\bar{t}})\geq w_{\alpha}(\bar{\omega}_{\bar{t}%
}\otimes\gamma_{t}),\ \ \forall\gamma_{t}\in\Lambda_{Q\times Q},\ \ t\geq
\bar{t}. \label{u-v-w}%
\end{equation}
Since $w_{\alpha}(\bar{\omega}_{\bar{t}})+\frac{1}{\alpha}\geq w_{\alpha
}(\omega_{t})+\frac{1}{\alpha}\geq M_{\alpha}$, we still have%
\[
\frac{\alpha}{2}\left\Vert \bar{\omega}_{t}^{1}-\bar{\omega}_{t}%
^{2}\right\Vert ^{2}\leq\frac{c}{2}.\ \ \ \
\]

We just need to take
\[
\gamma_{t}^{1}(s)=x+\bar{\omega}^{1}(\bar{t})\mathbf{1}_{[\bar{t}%
,t]}(s),\ \ \gamma_{t}^{2}(s)=y+\bar{\omega}^{2}(\bar{t})\mathbf{1}_{[\bar
{t},t]}(s)
\]
in (\ref{u-v-w}) and define%
\begin{align*}
\bar{u}(t,x)  &  =u(\bar{\omega}_{\bar{t}}^{1}\otimes\gamma_{t}^{1}%
)=u((\bar{\omega}_{\bar{t}}^{1})^{x})_{\bar{t},t-\bar{t}}),\ \ \bar
{v}(t,y)=v(\bar{\omega}_{\bar{t}}^{2}\otimes\gamma_{t}^{2})=v((\bar{\omega
}_{\bar{t}}^{1})^{y})_{\bar{t},t-\bar{t}}).\\
\bar{\varphi}_{\alpha}(t,x,y)  &  =\varphi_{\alpha}(\bar{\omega}_{\bar{t}%
}\otimes\gamma_{t})=\varphi_{\alpha}((\bar{\omega}_{\bar{t}}^{1})^{x}%
)_{\bar{t},t-\bar{t}},(\bar{\omega}_{\bar{t}}^{2})^{y})_{\bar{t},t-\bar{t}}).
\end{align*}
Inequality (\ref{u-v-w}) becomes%
\begin{align*}
\bar{u}(\bar{t},0)-\bar{v}(\bar{t},0)-\bar{\varphi}_{\alpha}(\bar{t},0,0)  &
\geq\bar{u}(t,x)-\bar{v}(t,y)-\bar{\varphi}_{\alpha}(t,x,y),\\
t  &  \geq\bar{t},\ \ x+\bar{\omega}^{1}(\bar{t}),\ y+\bar{\omega}^{2}(\bar
{t})\in Q.
\end{align*}
We then apply Lemma \ref{Thm-8.3} and use Condition (\ref{u-v}) to obtain
that, for each $\varepsilon>0$, there exist $X_{1},X_{2}\in\mathbb{S}(d)$ such
that
\begin{align*}
(b_{1},p,X_{1})  &  \in\bar{P}^{2,+}\bar{u}(\bar{t},0)\subset\bar{P}%
^{2,+}u(\bar{\omega}_{\bar{t}}^{1}),\ \ \\
(b_{2},p,X_{2})  &  \in\bar{P}^{2,-}\bar{v}(\bar{t},0)\subset\bar{P}%
^{2,-}v(\bar{\omega}_{\bar{t}}^{2}),
\end{align*}
with $p=\partial_{x}\bar{\varphi}_{\alpha}(\bar{t},0,0)$ and%
\begin{equation}
-(\frac{1}{\varepsilon}+\left\Vert A\right\Vert )I_{2d}\leq\left(
\begin{array}
[c]{cc}%
X_{1} & 0\\
0 & -X_{2}%
\end{array}
\right)  \leq A+\varepsilon A^{2},\ \ \ b_{1}-b_{2}\leq\partial_{t}%
\bar{\varphi}(t,0,0), \label{X1X2}%
\end{equation}
where $A$ is explicitly given by%
\[
A=\alpha J_{2d},\;\text{where}\;J_{2d}=\left(
\begin{array}
[c]{cc}%
I & -I\\
-I & I
\end{array}
\right)  .
\]
The second inequality of the first relation in (\ref{X1X2}) implies $X_{1}\leq
X_{2}$, the second relation implies
\begin{align*}
b_{1}-b_{2}  &  \leq\partial_{t}\bar{\varphi}(t,0,0)\\
&  =\frac{\alpha}{2}|\bar{\omega}^{1}(\bar{t})-\bar{\omega}^{2}(\bar{t}%
)|^{2}\ \ \ \ (\leq\frac{c}{2}).
\end{align*}
Form these it follows that,
\[
b_{1}+G(u(\bar{\omega}_{\bar{t}}^{1}),p,X_{1})\geq c,\ \ b_{2}+G(v(\bar
{\omega}_{\bar{t}}^{2}),p,X_{2})\leq0.\ \
\]
This, together with condition (\ref{DE}) for $G$, it follows that
\begin{align*}
c  &  \leq b_{1}+G(u(\bar{\omega}_{\bar{t}}^{1}),p,X_{1})-[b_{2}%
+G(v(\bar{\omega}_{\bar{t}}^{2}),p,X_{2})]+\frac{\alpha}{2}|\bar{\omega}%
^{1}(\bar{t})-\bar{\omega}^{2}(\bar{t})|^{2}\\
&  \leq\frac{\alpha}{2}|\bar{\omega}^{1}(\bar{t})-\bar{\omega}^{2}(\bar
{t})|^{2}\leq\frac{c}{2},\ \
\end{align*}
which induces a contradiction.
\end{proof}

\begin{remark}
The left frozen maximization procedure introduced in this version can be also
applied to obtain the corresponding domination theorem discussed in the 1st
version of this note.
\end{remark}

\begin{corollary}
We assume \textbf{(H2)}. Let $u\in$\textrm{USC}$_{\ast}(\Lambda_{\bar{Q}}%
)\cap\mathbb{C}^{1,0}(\Lambda_{\bar{Q}})$ (resp. $v\in$\textrm{LSC}$_{\ast
}(\Lambda_{\bar{Q}})\cap\mathbb{C}^{1,0}(\Lambda_{\bar{Q}})$) be a viscosity
subsolution (resp. supersolution) of (\ref{visPDE0}) satisfying (\ref{u-Lip}).
Then the comparison principle holds.
\end{corollary}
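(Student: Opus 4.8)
The plan is to deduce the statement directly from Theorem \ref{Thm-dom1}. The only hypothesis of that theorem not already present here is Condition (\ref{u-v}), so it suffices to show that the added regularity $u,v\in\mathbb{C}^{1,0}(\Lambda_{\bar{Q}})$ forces (\ref{u-v}) to hold automatically. Fix $\hat{\omega}_{\hat{t}}\in\Lambda_{Q}$ and, as in the theorem, write $\hat{u}(t,x):=u((\hat{\omega}_{\hat{t}}^{x})_{\hat{t},t-\hat{t}})$; I will argue that $\bar{P}^{2,+}\hat{u}(\hat{t},0)\subset\bar{P}^{2,+}u(\hat{\omega}_{\hat{t}})$, the inclusion for $\hat{v}$ being symmetric. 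The remaining hypotheses of Theorem \ref{Thm-dom1} are already in force: (\ref{u-Lip}) is assumed, and since the metric $d$ is bounded on $\Lambda_{\bar{Q}}$ when $Q$ is bounded, (\ref{u-Lip}) makes $u$ and $v$ of bounded oscillation, so $u-v$ is bounded from above.

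First I would record the elementary identity underlying (\ref{u-v}). Since $u((\hat{\omega}_{\hat{t}}^{x})_{\hat{t},\delta})=\hat{u}(\hat{t}+\delta,x)$ and $u(\hat{\omega}_{\hat{t}})=\hat{u}(\hat{t},0)$, the defining inequality of the path superjet $P^{2,+}u(\hat{\omega}_{\hat{t}})$ is word-for-word that of the finite-dimensional parabolic superjet $P^{2,+}\hat{u}(\hat{t},0)$; hence these two \emph{non-closed} jets coincide, and more generally $P^{2,+}u(\hat{\omega}_{\hat{t}}^{y})=P^{2,+}\hat{u}(\hat{t},y)$ for every admissible $y$. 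All the difficulty is therefore concentrated in the two \emph{closures}: $\bar{P}^{2,+}\hat{u}(\hat{t},0)$ is formed by letting the base point run over $(t_{n},x_{n})\rightarrow(\hat{t},0)$ with $t_{n}\downarrow\hat{t}$, whereas $\bar{P}^{2,+}u(\hat{\omega}_{\hat{t}})$ is formed by letting the base \emph{path} run over $\omega^{n}_{t_{n}}\rightarrow\hat{\omega}_{\hat{t}}$ in the metric $d$.

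Here is where $\mathbb{C}^{1,0}$-regularity enters. Because $D_{t}u$ exists and $(s,x)\mapsto D_{t}u((\hat{\omega}_{\hat{t}}^{x})_{\hat{t},s-\hat{t}})$ is continuous, $\hat{u}$ is continuously differentiable in its time variable with $\partial_{t}\hat{u}=D_{t}u$. Consequently every parabolic superjet $(a,p,X)\in P^{2,+}\hat{u}(t,x)$ splits: the forward-in-time test forces $a\geq\partial_{t}\hat{u}(t,x)$, while $(p,X)$ is an ordinary (elliptic) superjet of the slice $x\mapsto\hat{u}(t,\cdot)$. The time component then causes no trouble, since any limit $a=\lim a_{n}$ satisfies $a\geq\lim\partial_{t}\hat{u}(t_{n},x_{n})=\partial_{t}\hat{u}(\hat{t},0)$ by continuity, and at a frozen-at-$\hat{t}$ path $\hat{\omega}_{\hat{t}}^{y}$ any time value $\geq\partial_{t}\hat{u}(\hat{t},y)$ is freely admissible. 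Noting that $\hat{\omega}_{\hat{t}}^{y}\rightarrow\hat{\omega}_{\hat{t}}$ in $d$ (indeed $d(\hat{\omega}_{\hat{t}}^{y},\hat{\omega}_{\hat{t}})=|y|^{2}$) and that $P^{2,+}u(\hat{\omega}_{\hat{t}}^{y})=P^{2,+}\hat{u}(\hat{t},y)$, I would thereby reduce (\ref{u-v}) to the purely finite-dimensional claim that every limit of spatial superjets of the slices $\hat{u}(t_{n},\cdot)$ at points $x_{n}$, as $(t_{n},x_{n})\rightarrow(\hat{t},0)$, is again a limit of spatial superjets of the single slice $\hat{u}(\hat{t},\cdot)$ at points $y_{n}\rightarrow0$.

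This last reduction is the step I expect to be the main obstacle, and it is where Condition (\ref{u-Lip}) must be used, not merely the continuity of $D_{t}u$. The slices differ by $h_{n}(x)=\int_{\hat{t}}^{t_{n}}D_{t}u((\hat{\omega}_{\hat{t}}^{x})_{\hat{t},s-\hat{t}})\,ds$, which tends to $0$ uniformly but only continuously, not in $C^{2}$, so a second-order superjet cannot be transferred term by term. The quantitative handle is (\ref{u-Lip}): comparing the path $((\hat{\omega}_{\hat{t}}^{x_{n}})_{\hat{t},t_{n}-\hat{t}})^{z}$, frozen at value $\hat{\omega}(\hat{t})+x_{n}$ on $[\hat{t},t_{n})$, with the genuinely frozen-at-$\hat{t}$ path, shows the two values of $u$ differ by at most $\rho(|z|^{2}(t_{n}-\hat{t}))$. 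Since this bound is \emph{not} $o(|z|^{2})$ for a fixed $n$, the argument cannot proceed jet-by-jet; the natural route is to pass to the frozen-at-$\hat{t}$ paths $\hat{\omega}_{\hat{t}}^{x_{n}}$, whose jets are genuine path superjets of $u$, and to run a diagonal argument letting $t_{n}\downarrow\hat{t}$ fast enough (relative to the spatial scale on which the superjet at $x_{n}$ is detected) that the $\rho(|z|^{2}(t_{n}-\hat{t}))$ error becomes negligible on the relevant window. Making this rigorous is precisely the delicate point that Condition (\ref{u-v}) was introduced to sidestep in the general theorem; once it is secured under $\mathbb{C}^{1,0}$-regularity, (\ref{u-v}) holds, and Theorem \ref{Thm-dom1} then applies verbatim to give the comparison principle.
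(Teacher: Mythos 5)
Your opening reduction is sound as far as it goes: the unclosed jets indeed coincide, $P^{2,+}u(\hat{\omega}_{\hat{t}}^{y})=P^{2,+}\hat{u}(\hat{t},y)$, and $\mathbb{C}^{1,0}$-regularity does split any parabolic jet of $\hat{u}$ into a time component $a\geq\partial_{t}\hat{u}$ plus an elliptic jet of the spatial slice. But your proof has a genuine gap exactly where you flag it: the transfer of second-order spatial jets from the slices $\hat{u}(t_{n},\cdot)$ at $x_{n}$, $t_{n}\downarrow\hat{t}$, to the single slice $\hat{u}(\hat{t},\cdot)$. The diagonal argument you sketch cannot repair this, because the sequence $(t_{n},x_{n})$ is part of the given data of the element of $\bar{P}^{2,+}\hat{u}(\hat{t},0)$ you must handle --- you are not free to send $t_{n}\downarrow\hat{t}$ \textquotedblleft fast enough relative to the spatial scale\textquotedblright, since a superjet carries no quantitative scale (its $o(|x|^{2})$ error admits no uniform modulus), and uniform closeness of the slices, which is all that (\ref{u-Lip}) and continuity of $D_{t}u$ provide, does not transport second-order jets between nearby functions. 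So Condition (\ref{u-v}) is not established; note the paper itself never claims that $\mathbb{C}^{1,0}$-regularity implies (\ref{u-v}) --- that condition is precisely the part the author describes as \textquotedblleft still to be improved\textquotedblright.

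The paper's own proof avoids this problem rather than solving it: it does not verify (\ref{u-v}) and does not invoke Theorem \ref{Thm-dom1} as a black box, but re-runs its argument with two changes made possible by $\mathbb{C}^{1,0}$-regularity. First, at the left-frozen maximum $\bar{\omega}_{\bar{t}}$ given by Lemma \ref{maxLemma}, the time direction is handled by ordinary calculus: $t\mapsto\bar{u}(t,0)-\bar{v}(t,0)-\bar{\varphi}_{\alpha}(t,0,0)$ is differentiable with derivatives $D_{t}u$, $D_{t}v$ and is maximal at $t=\bar{t}$, which yields $D_{t}u(\bar{\omega}_{\bar{t}}^{1})-D_{t}v(\bar{\omega}_{\bar{t}}^{2})\leq\partial_{t}\bar{\varphi}_{\alpha}(\bar{t},0,0)$ directly, with no jet in the time variable at all. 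Second, the matrix part comes from the time-frozen (elliptic) form of Ishii's lemma applied to $(x,y)\mapsto\bar{u}(\bar{t},x)-\bar{v}(\bar{t},y)-\bar{\varphi}_{\alpha}(\bar{t},x,y)$, producing $(p,X_{1})\in\bar{J}^{2,+}\bar{u}_{0}(0)$ and $(p,X_{2})\in\bar{J}^{2,-}\bar{v}_{0}(0)$ for the purely spatial functions $\bar{u}_{0}(x):=u((\bar{\omega}_{\bar{t}}^{1})^{x})$, $\bar{v}_{0}(y):=v((\bar{\omega}_{\bar{t}}^{2})^{y})$; Lemma \ref{A1} then converts these into closed path jets $(D_{t}u(\bar{\omega}_{\bar{t}}^{1}),p,X_{1})\in\bar{P}^{2,+}u(\bar{\omega}_{\bar{t}}^{1})$ and $(D_{t}v(\bar{\omega}_{\bar{t}}^{2}),p,X_{2})\in\bar{P}^{2,-}v(\bar{\omega}_{\bar{t}}^{2})$, after which the contradiction is derived as in the theorem. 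The reason Lemma \ref{A1} works cleanly while your slice transfer does not is that its approximating base points are the vertically perturbed paths $(\hat{\omega}_{\hat{t}})^{\hat{y}^{(j)}}$ at the \emph{same} time $\hat{t}$: the frozen-segment mismatch that produces your uncontrollable error $\rho(|z|^{2}(t_{n}-\hat{t}))$ never arises, and indeed no use of (\ref{u-Lip}) is needed for the jet transfer. To salvage your write-up, replace the attempt to prove (\ref{u-v}) by this modification of the theorem's proof.
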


The proof of this corollary is based on the above theorem the
following two lemmas.

\begin{lemma}
Let $\bar{u}\in C^{1,0}((0,T)\times\mathbb{R}^{d})$ and let $(p,X)\in\bar
{J}^{2,+}\bar{u}(\hat{t},\hat{y})$ for a given $(\hat{t},\hat{y})\in
\lbrack0,T)$, here $\hat{t}$ is regarded as a fixed variable. We have
$(\partial_{t}\bar{u}(\hat{t},\hat{y}),p,X)\in\bar{P}^{2,+}\bar{u}(\hat
{t},\hat{y})$
\end{lemma}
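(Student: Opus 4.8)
The plan is to reduce the statement to an elementary non-closure version and then pass to the limit, the whole point being that $C^{1,0}$-regularity lets one factor the parabolic increment into a smooth time part plus the spatial superjet part, with the time-slot forced to equal $\partial_t\bar u$. So first I would establish: if $(p,X)\in J^{2,+}\bar u(\hat t,\hat y)$ (the ordinary spatial superjet of the map $y\mapsto\bar u(\hat t,y)$ at $\hat y$), then $(\partial_t\bar u(\hat t,\hat y),p,X)\in P^{2,+}\bar u(\hat t,\hat y)$. Granting this, the closure statement follows by a short limiting argument driven by the continuity of $\partial_t\bar u$.

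For the non-closure step, since $\bar u\in C^{1,0}$ I would write, for $\delta\ge0$, the fundamental theorem of calculus in the time variable,
\[
\bar u(\hat t+\delta,\hat y+x)-\bar u(\hat t,\hat y+x)=\int_0^\delta\partial_t\bar u(\hat t+s,\hat y+x)\,ds,
\]
and then replace the integrand by $\partial_t\bar u(\hat t,\hat y)$ at the cost of an error $\varepsilon(\delta,x):=\sup_{0\le s\le\delta}|\partial_t\bar u(\hat t+s,\hat y+x)-\partial_t\bar u(\hat t,\hat y)|$, which tends to $0$ as $(\delta,x)\to(0,0)$ by continuity of $\partial_t\bar u$. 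This gives $\bar u(\hat t+\delta,\hat y+x)=\bar u(\hat t,\hat y+x)+\delta\,\partial_t\bar u(\hat t,\hat y)+o(\delta+|x|^2)$, the key observation being that the remainder is bounded by $\delta\,\varepsilon(\delta,x)\le(\delta+|x|^2)\varepsilon(\delta,x)=o(\delta+|x|^2)$. Inserting the spatial superjet inequality $\bar u(\hat t,\hat y+x)\le\bar u(\hat t,\hat y)+\langle p,x\rangle+\tfrac12\langle Xx,x\rangle+o(|x|^2)$ then produces exactly the defining inequality of $P^{2,+}\bar u(\hat t,\hat y)$ with time-slot $\partial_t\bar u(\hat t,\hat y)$.

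For the closure statement, given $(p,X)\in\bar J^{2,+}\bar u(\hat t,\hat y)$ I would choose $y_n\to\hat y$ and $(p_n,X_n)\in J^{2,+}\bar u(\hat t,y_n)$ with $(y_n,\bar u(\hat t,y_n),p_n,X_n)\to(\hat y,\bar u(\hat t,\hat y),p,X)$. Applying the non-closure step at each $(\hat t,y_n)$ yields $(\partial_t\bar u(\hat t,y_n),p_n,X_n)\in P^{2,+}\bar u(\hat t,y_n)$. Because $\partial_t\bar u$ is continuous—exactly where $C^{1,0}$ is decisive—we have $\partial_t\bar u(\hat t,y_n)\to\partial_t\bar u(\hat t,\hat y)$, so the sequence $\bigl((\hat t,y_n),\bar u(\hat t,y_n),\partial_t\bar u(\hat t,y_n),p_n,X_n\bigr)$ converges to $\bigl((\hat t,\hat y),\bar u(\hat t,\hat y),\partial_t\bar u(\hat t,\hat y),p,X\bigr)$, which by the definition of $\bar P^{2,+}$ places $(\partial_t\bar u(\hat t,\hat y),p,X)$ in $\bar P^{2,+}\bar u(\hat t,\hat y)$, as required.

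The main obstacle is the first step, and within it the claim that the time-remainder is $o(\delta+|x|^2)$ jointly in $(\delta,x)$: one must invoke uniform continuity of $\partial_t\bar u$ on a small neighborhood, not merely existence of the time derivative, so that the single error function $\varepsilon(\delta,x)$ controls the integral remainder uniformly in $x$. The limiting step is then routine, its only substantive ingredient being the continuity of $\partial_t\bar u$; this same continuity also disposes of the boundary case $\hat t=0\in[0,T)$, where $\partial_t$ is read as the forward derivative, consistent with the forward-in-time ($\delta\ge0$) definition of the parabolic superjet.
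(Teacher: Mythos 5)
Your proof is correct and follows essentially the same route as the paper: the paper likewise writes the increment as $[\bar{u}(t,y)-\bar{u}(\hat{t},y)]+[\bar{u}(\hat{t},y)-\bar{u}(\hat{t},\hat{y}^{(j)})]$, handles the time part by the fundamental theorem of calculus together with continuity of $\partial_{t}\bar{u}$, inserts the spatial superjet inequality to get $(\partial_{t}\bar{u}(\hat{t},\hat{y}^{(j)}),p^{(j)},X^{(j)})\in P^{2,+}\bar{u}(\hat{t},\hat{y}^{(j)})$ along the approximating sequence, and then passes to the limit using continuity of $\partial_{t}\bar{u}$. Your only departure is organizational (isolating the single-point, non-closure statement before taking limits, and making the $o(\delta+|x|^{2})$ estimate explicit), which matches the paper's argument in substance.
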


\begin{proof}
$(p,X)\in\bar{J}^{2,+}\bar{u}(\hat{t},\hat{y})$ means that there exists
$\hat{y}^{(j)}\rightarrow\hat{y}$ and $(p^{(j)},X^{(j)})\in J^{2,+}\bar
{u}(\hat{t},\hat{y}^{(j)})$ such that $\bar{u}(\hat{t},\hat{y}^{(j)}%
)\rightarrow\bar{u}(\hat{t},\hat{y})$ and $(p^{(j)},X^{(j)})\rightarrow(p,X)$.
From
\[
\bar{u}(\hat{t},y)\leq\bar{u}(\hat{t},\hat{y}^{(j)})+\left\langle
p^{(j)},y-\hat{y}^{(j)}\right\rangle +\left\langle X^{(j)}(y-\hat{y}%
^{(j)}),y-\hat{y}^{(j)}\right\rangle +o(|y-\hat{y}^{(j)}|^{2})
\]
we have%
\begin{align*}
&  \bar{u}(t,y)-\bar{u}(\hat{t},\hat{y}^{(j)})\\
&  =[\bar{u}(t,y)-\bar{u}(\hat{t},y)]+[\bar{u}(\hat{t},y)-\bar{u}(\hat{t}%
,\hat{y}^{(j)})]\\
&  \leq\int_{0}^{1}\partial_{t}\bar{u}(\hat{t}+\alpha(t-\hat{t}),y)d\alpha
\cdot(t-\hat{t})\\
&  +\left\langle p^{(j)},y-\hat{y}^{(j)}\right\rangle +\left\langle
X^{(j)}(y-\hat{y}^{(j)}),y-\hat{y}^{(j)}\right\rangle +o(|y-\hat{y}^{(j)}%
|^{2})\\
&  =\partial_{t}\bar{u}(\hat{t}^{(j)},\hat{y}^{(j)})(t-\hat{t}^{(j)}%
)+\left\langle p^{(j)},y-\hat{y}^{(j)}\right\rangle +\left\langle
X^{(j)}(y-\hat{y}^{(j)}),y-\hat{y}^{(j)}\right\rangle \\
&  +o(|y-\hat{y}^{(j)}|^{2}+|t-\hat{t}^{(j)}|).
\end{align*}
It follows that $(\partial_{t}\bar{u}(\hat{t},\hat{y}^{(j)}),p^{(j)}%
,X^{(j)})\in P^{2,+}\bar{u}(\hat{t},\hat{y}^{(j)})$, $j=1,2,\cdots$. Sine this
sequence converges to $(\partial_{t}\bar{u}(\hat{t},\hat{y}),p,X)$, we then
get the result.
\end{proof}

We extend this result to the following case for path-dependent functions:

\begin{lemma}
\label{A1}For a given function $u\in\mathbb{C}^{1,0}(\Lambda_{\bar{Q}})$ and a
fixed $\hat{\omega}_{\hat{t}}\in\Lambda$, $\hat{t}\in(0,T)$, we set $\bar
{u}(x):=u((\hat{\omega}_{\hat{t}})^{x})$, $x+\hat{\omega}(\hat{t})\in Q$, and
assume that
\begin{equation}
(p,X)\in\bar{J}^{2,+}\bar{u}(0). \label{a1}%
\end{equation}
Then $(D_{t}u(\hat{\omega}_{\hat{t}}),p,X)\in\bar{P}^{2,+}u(\hat{\omega}%
_{\hat{t}})$.
\end{lemma}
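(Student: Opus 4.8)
The plan is to mirror the proof of the preceding state–space lemma: the vertical (spatial) superjet of $\bar u$ will supply the pair $(p,X)$, while the horizontal derivative $D_t u$ will supply the time–coefficient, and the two estimates will be glued along an approximating sequence. First I would unfold $(p,X)\in\bar J^{2,+}\bar u(0)$: there exist $x_j\to 0$ and $(p_j,X_j)\in J^{2,+}\bar u(x_j)$ with $\bar u(x_j)\to\bar u(0)$ and $(p_j,X_j)\to(p,X)$. Writing $\omega^{(j)}_{\hat t}:=(\hat\omega_{\hat t})^{x_j}\in\Lambda_Q$ and noting $(\hat\omega_{\hat t})^{x_j+x}=(\omega^{(j)}_{\hat t})^{x}$, the jet inequality for $\bar u$ at $x_j$ reads, for all small $x$,
\[
u((\omega^{(j)}_{\hat t})^{x})=\bar u(x_j+x)\le u(\omega^{(j)}_{\hat t})+\langle p_j,x\rangle+\frac{1}{2}\langle X_j x,x\rangle+o(|x|^{2}),
\]
which controls the purely vertical increment at the base path $\omega^{(j)}_{\hat t}$.

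Next I would fix $j$ and, for $\delta\ge 0$ and small $x$, split the full jet increment as
\[
u(((\omega^{(j)}_{\hat t})^{x})_{\hat t,\delta})-u(\omega^{(j)}_{\hat t})=\big[u(((\omega^{(j)}_{\hat t})^{x})_{\hat t,\delta})-u((\omega^{(j)}_{\hat t})^{x})\big]+\big[u((\omega^{(j)}_{\hat t})^{x})-u(\omega^{(j)}_{\hat t})\big].
\]
The second bracket is bounded above by the display of the first paragraph. For the first bracket, set $\eta:=(\omega^{(j)}_{\hat t})^{x}\in\Lambda_Q$ and $g(r):=u(\eta_{\hat t,r})$; since $u\in\mathbb{C}^{1,0}(\Lambda_{\bar Q})$, $g$ is $C^1$ with $g'(r)=D_t u(\eta_{\hat t,r})$ and $D_t u$ is continuous in its path argument, so $g(\delta)-g(0)=\int_0^\delta D_t u(\eta_{\hat t,r})\,dr$. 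Comparing the integrand with $D_t u(\omega^{(j)}_{\hat t})$ and using that $\eta_{\hat t,r}\to\omega^{(j)}_{\hat t}$ as $(\delta,x)\to(0,0)$ uniformly over $r\in[0,\delta]$ turns the first bracket into $D_t u(\omega^{(j)}_{\hat t})\,\delta+o(\delta)=D_t u(\omega^{(j)}_{\hat t})\,\delta+o(\delta+|x|^{2})$. Adding the two brackets gives $(D_t u(\omega^{(j)}_{\hat t}),p_j,X_j)\in P^{2,+}u(\omega^{(j)}_{\hat t})$.

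Finally I would let $j\to\infty$. Since $\omega^{(j)}_{\hat t}$ and $\hat\omega_{\hat t}$ differ only in their terminal value by $x_j$, one has $d(\omega^{(j)}_{\hat t},\hat\omega_{\hat t})=|x_j|^{2}\to 0$ and $u(\omega^{(j)}_{\hat t})=\bar u(x_j)\to\bar u(0)=u(\hat\omega_{\hat t})$; moreover $(p_j,X_j)\to(p,X)$ and, by continuity of the horizontal derivative, $D_t u(\omega^{(j)}_{\hat t})\to D_t u(\hat\omega_{\hat t})$. By the definition of $\bar P^{2,+}$ this yields $(D_t u(\hat\omega_{\hat t}),p,X)\in\bar P^{2,+}u(\hat\omega_{\hat t})$, as claimed.

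I expect the main obstacle to be the bookkeeping of the error terms: I must ensure that the separate vertical estimate $o(|x|^{2})$ and horizontal estimate $o(\delta)$ combine into a single $o(\delta+|x|^{2})$ valid for the joint limit $(\delta,x)\to(0,0)$, which rests on the joint continuity of $D_t u$ furnished by $\mathbb{C}^{1,0}(\Lambda_{\bar Q})$. A secondary technical point is justifying the representation $g(\delta)-g(0)=\int_0^\delta D_t u(\eta_{\hat t,r})\,dr$ directly from the $\mathbb{C}^{1,0}$ hypothesis and verifying that every intermediate path $\eta_{\hat t,r}$ remains in $\Lambda_Q$, so that $D_t u$ is defined there and the spatial jet inequality for $\bar u$ applies.
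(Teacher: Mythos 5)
Your proposal is correct and follows essentially the same route as the paper's own proof: unfold $\bar{J}^{2,+}\bar{u}(0)$ along a sequence $x_j\to 0$, split the increment $u(((\hat{\omega}_{\hat{t}})^{x_j+x})_{\hat{t},\delta})-u((\hat{\omega}_{\hat{t}})^{x_j})$ into a horizontal part handled by the fundamental theorem of calculus and the continuity of $D_{t}u$ from the $\mathbb{C}^{1,0}$ hypothesis, and a vertical part handled by the spatial jet inequality, yielding $(D_{t}u((\hat{\omega}_{\hat{t}})^{x_j}),p_j,X_j)\in P^{2,+}u((\hat{\omega}_{\hat{t}})^{x_j})$, and then pass to the limit. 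Your error bookkeeping (the joint $o(\delta+|x|^{2})$ estimate and the explicit check that the paths converge in the metric $d$) is, if anything, slightly more careful than the paper's.
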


\begin{proof}
(\ref{a1}) means that there exists a sequence $\{\hat{y}^{(j)}\}_{j=1}%
^{\infty}$ in $Q$ such that $(p^{(j)},X^{(j)})\in J^{2,+}\bar{u}(\hat{y}%
^{(j)})$ and $\hat{y}^{(j)}\rightarrow0$, $\bar{u}(\hat{y}^{(j)}%
)\rightarrow\bar{u}(0)$, $(p^{(j)},X^{(j)})\rightarrow(p,X)$ as $j\rightarrow
\infty$. From
\[
\bar{u}(y)\leq\bar{u}(\hat{y}^{(j)})+\left\langle p^{(j)},y-\hat{y}%
^{(j)}\right\rangle +\left\langle X^{(j)}(y-\hat{y}^{(j)}),y-\hat{y}%
^{(j)}\right\rangle +o(|y-\hat{y}^{(j)}|^{2}),
\]
we have,
\begin{align*}
u(((  &  \hat{\omega}_{\hat{t}})^{y})_{\hat{t},t-\hat{t}})-u((\hat{\omega
}_{\hat{t}})^{\hat{y}^{(j)}})\\
&  =[u(((\hat{\omega}_{\hat{t}})^{y})_{\hat{t},t-\hat{t}})-u((\hat{\omega
}_{\hat{t}})^{y})]+[u((\hat{\omega}_{\hat{t}})^{y})-u((\hat{\omega}_{\hat{t}%
})^{\hat{y}^{(j)}})]\\
&  \leq\int_{0}^{1}D_{t}u(((\hat{\omega}_{\hat{t}})^{y})_{\hat{t}%
,\alpha(t-\hat{t})})d\alpha\cdot(t-\hat{t})\\
&  +\left\langle p^{(j)},y-\hat{y}^{(j)}\right\rangle +\left\langle
X^{(j)}(y-\hat{y}^{(j)}),y-\hat{y}^{(j)}\right\rangle +o(|y-\hat{y}^{(j)}%
|^{2})\\
&  =D_{t}u((\hat{\omega}_{\hat{t}})^{\hat{y}^{(j)}})(t-\hat{t}^{(j)}%
)+\left\langle p^{(j)},y-\hat{y}^{(j)}\right\rangle +\left\langle
X^{(j)}(y-\hat{y}^{(j)}),y-\hat{y}^{(j)}\right\rangle \\
&  +o(|y-\hat{y}^{(j)}|^{2}+|t-\hat{t}^{(j)}|).
\end{align*}
It follows that $(D_{t}u((\hat{\omega}_{\hat{t}})^{\hat{y}^{(j)}}%
),p^{(j)},X^{(j)})\in P^{2,+}u((\hat{\omega}_{\hat{t}})^{\hat{y}^{(j)}})$. But
we also have, as $j\rightarrow\infty$,
\[
(D_{t}u((\hat{\omega}_{\hat{t}})^{\hat{y}^{(j)}}),p^{(j)},X^{(j)}%
)\rightarrow(D_{t}u(\hat{\omega}_{\hat{t}}),p,X).
\]
Thus $(D_{t}u(\hat{\omega}_{\hat{t}}),p,X)\in\bar{P}^{2,+}u(\hat{\omega}%
_{\hat{t}})$. The proof is complete.
\end{proof}

\section{Comparison principle for viscosity solution of 1st order
path-dependent PDE}

We consider the comparison principle for viscosity solutions of 1st
order path-dependent PDE. The following lemma is a generalization
the left frozen maximization provided in Lemma \ref{maxLemma}.

\begin{lemma}
\label{maxLemma-a}Let $Q$ be a bounded open subset of $\mathbb{R}^{d}$ and let
$u\in USC_{\ast}(\bar{Q}\times\bar{Q})$ be bounded from above. Then for each
given $\omega_{t_{0}}^{(0)}$, $\upsilon_{s_{0}}^{(0)}\in\Lambda_{\bar{Q}}$,
there exist $\bar{\omega}_{\bar{t}}$, $\bar{\upsilon}_{\bar{s}}\in
\Lambda_{\bar{Q}}$, satisfying $\bar{t}\geq t$, $\bar{s}\geq s$,
$u(\omega_{t_{0}}^{(0)},\upsilon_{s_{0}}^{(0)})\leq u(\bar{\omega}_{\bar{t}%
},\bar{\upsilon}_{\bar{s}})$ and $\bar{\omega}_{\bar{t}}=\omega_{t_{0}}%
^{(0)}\otimes\bar{\omega}_{\bar{t}}$, $\bar{\upsilon}_{\bar{s}}=\upsilon
_{s_{0}}^{(0)}\otimes\bar{\upsilon}_{\bar{s}}$, such that
\begin{equation}
u(\bar{\omega}_{\bar{t}},\bar{\upsilon}_{\bar{s}})=\sup_{\substack{\gamma
_{t}\in\Lambda_{\bar{Q}},\ t\geq\bar{t},\\\eta_{s}\in\Lambda_{\bar{Q}}%
,\ s\geq\bar{s}}}u(\bar{\omega}_{\bar{t}}\otimes\gamma_{t},\bar{\upsilon
}_{\bar{s}}\otimes\eta_{s}).\text{ } \label{rmax-a}%
\end{equation}

\end{lemma}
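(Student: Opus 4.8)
The plan is to adapt the single-path "left frozen maximization" argument of Lemma~\ref{maxLemma} to the product situation, treating the pair $(\omega_t^1,\omega_t^2)$ as a single point in the product space $\Lambda_{\bar{Q}\times\bar{Q}}$. The key observation is that $\bar{Q}\times\bar{Q}$ is again a bounded open subset of $\mathbb{R}^{2d}$ (with closure $\overline{Q\times Q}=\bar Q\times\bar Q$), and $u$ is a $USC_{\ast}$ function on paths valued in this product, bounded from above. So in principle the result should follow by applying Lemma~\ref{maxLemma} verbatim with $d$ replaced by $2d$, $Q$ replaced by $Q\times Q$, and the starting path $\omega_{t_0}^{(0)}$ replaced by the product path $(\omega_{t_0}^{(0)},\upsilon_{s_0}^{(0)})$.

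First I would set up the identification carefully. A pair of paths $(\omega_t,\upsilon_s)$ with a \emph{common} terminal time is what the product formulation of Lemma~\ref{maxLemma} handles directly; the complication here is that the statement allows two separate terminal times $t$ and $s$ and two separate sup-ranges $t\ge\bar t$, $s\ge\bar s$. I would reconcile this by noting that the vertical perturbations and the horizontal extensions in the two coordinates are genuinely independent, so the supremum over the product $\{\gamma_t:t\ge\bar t\}\times\{\eta_s:s\ge\bar s\}$ factorizes as a supremum over a single enlarged parameter and the frozen-maximization machinery applies to each coordinate's past independently. Concretely, I would run the same iterative bisection scheme: set $m_0:=u(\omega_{t_0}^{(0)},\upsilon_{s_0}^{(0)})$ and $\bar m_0$ equal to the supremum in \eqref{rmax-a} over all future extensions of \emph{both} coordinates; if $\bar m_0=m_0$ we stop, otherwise choose a pair $(\omega_{t_1}^{(1)},\upsilon_{s_1}^{(1)})$ extending the current pasts with $u$-value at least $(m_0+\bar m_0)/2$, and iterate. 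The geometric contraction $\bar m_{i+1}-m_{i+1}\le(\bar m_i-m_i)/2$ forces $m_i\uparrow\bar m$ and $\bar m_i\downarrow\bar m$, and the $USC_{\ast}$ property supplies a limiting pair $(\bar\omega_{\bar t},\bar\upsilon_{\bar s})$ with the maximizing vertical terminal positions, exactly as in the single-path proof.

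The main obstacle I expect is the limiting step when the procedure does not terminate: here $t_i\uparrow\bar t$ and $s_i\uparrow\bar s$, and I must produce limit paths $\bar\omega_{\bar t}$, $\bar\upsilon_{\bar s}$ in \emph{each} coordinate and verify the $USC_{\ast}$ condition~(ii) \emph{jointly} rather than coordinatewise. The subtlety is that $\bar t$ and $\bar s$ need not coincide, so I cannot literally invoke a product-space version with a single terminal time; I would instead apply condition~(ii) of the $USC_{\ast}$ definition separately along each coordinate's increasing time sequence to get $\limsup_i u(\omega_{t_i}^{(i)},\upsilon_{s_i}^{(i)})\le\sup_{x,y}u((\bar\omega_{\bar t}^x),(\bar\upsilon_{\bar s}^y))$, and then select the optimal terminal positions $\bar\omega(\bar t)$, $\bar\upsilon(\bar s)$. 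Establishing that a single limiting pair $(\bar\omega_{\bar t},\bar\upsilon_{\bar s})$ simultaneously realizes both frozen pasts is the delicate point that the product structure does not hand us for free.

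Finally I would verify \eqref{rmax-a} by the same contradiction argument as in Lemma~\ref{maxLemma}: if some extension $(\bar\omega_{\bar t}\otimes\gamma_t,\bar\upsilon_{\bar s}\otimes\eta_s)$ exceeded $u(\bar\omega_{\bar t},\bar\upsilon_{\bar s})=\bar m$ by $\delta>0$, then since this extension also extends the frozen pasts $(\omega_{t_i}^{(i)},\upsilon_{s_i}^{(i)})$ for every $i$, its value is bounded by $\bar m_i\to\bar m$, contradicting the strict gap $\delta$. The inequality $u(\omega_{t_0}^{(0)},\upsilon_{s_0}^{(0)})\le u(\bar\omega_{\bar t},\bar\upsilon_{\bar s})$ follows because $m_0\le\bar m$. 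Thus the proof is essentially a faithful transcription of the single-coordinate argument, with the only real care required in the non-terminating case where the two frozen times decouple.
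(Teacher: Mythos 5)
Your proposal follows essentially the same route as the paper: the paper's own proof runs exactly this iterative bisection scheme on pairs $(\omega_{t_i}^{(i)},\upsilon_{s_i}^{(i)})$ with vertically maximizing terminal positions, the geometric contraction $\bar m_{i+1}-m_{i+1}\leq(\bar m_i-m_i)/2$, the limiting step with the two times $t_i\uparrow\bar t$, $s_i\uparrow\bar s$ handled coordinatewise via the $USC_{\ast}$ property, and the identical final contradiction argument against $\bar m_i\rightarrow\bar m$. You correctly identified the one genuinely delicate point (the decoupling of $\bar t$ and $\bar s$ in the non-terminating case), which the paper itself treats in the same informal coordinatewise manner.
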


\begin{proof}
Clearly $u(\omega_{t_{0}}^{(0),x},\upsilon_{s_{0}}^{(0),y})$ is an
USC-function of $x$ and $y$. Thus without loss of generality, we can assume
that $u(\omega_{t_{0}}^{(0)},\upsilon_{s_{0}}^{(0)})\geq u(\omega_{t_{0}%
}^{(0),x},\upsilon_{s_{0}}^{(0),y})$, for all $x$ and $y$ such that
$x+\omega^{(0)}(t_{0})$, $y+\upsilon^{(0)}(s_{0})\in\bar{Q}$. We set
$m_{0}:=u(\omega_{t_{0}}^{(0)},\upsilon_{s_{0}}^{(0)})$ and%
\[
\bar{m}_{0}:=\sup_{\substack{\gamma_{t}\in\Lambda_{\bar{Q}},\ t\geq
t_{0}\\\eta_{s}\in\Lambda_{\bar{Q}},\ s\geq s_{0}}}u(\omega_{t_{0}}%
^{(0)}\otimes\gamma_{t},\upsilon_{s_{0}}^{(0)}\otimes\eta_{s})\geq m_{0}.
\]
If $\bar{m}_{0}=m_{0}$ then we can take $\bar{\omega}_{\bar{t}}=\omega_{t_{0}%
}^{(0)}$, $\bar{\upsilon}_{\bar{s}}=\upsilon_{s_{0}}^{(0)}$ and finish the
procedure. Otherwise there exists $\omega_{t_{1}}^{(1)}\in\Lambda_{\bar{Q}}$
with $t_{1}\geq t_{0}$, $s_{1}\geq s_{0}$ and $t_{1}+s_{1}>t_{0}+s_{0}$,
\[
u(\omega_{t_{1}}^{(1)},\upsilon_{s_{1}}^{(1)})\geq u(\omega_{t_{1}}%
^{(1),x},\upsilon_{s_{1}}^{(1),y}),
\]
for all $x,y$ such that $x+\omega^{(1)}(t_{1})$, $y+\upsilon^{(1)}(s_{1}%
)\in\bar{Q}$, satisfying $\omega_{t_{1}}^{(1)}=$ $\omega_{t_{0}}^{(0)}%
\otimes\omega_{t_{1}}^{(1)}$, $\upsilon_{s_{1}}^{(1)}=\upsilon_{s_{0}}%
^{(0)}\otimes\upsilon_{s_{1}}^{(1)}$and
\[
m_{1}:=u(\omega_{t_{1}}^{(1)},\upsilon_{s_{1}}^{(1)})\geq\frac{m_{0}+\bar
{m}_{0}}{2}.
\]
We set%
\[
\bar{m}_{1}:=\sup_{\substack{\gamma_{t}\in\Lambda_{\bar{Q}},\ t\geq
t_{1}\\_{\eta_{s}\in\Lambda_{\bar{Q}},\ s\geq s_{1}}}}u(\omega_{t_{1}}%
^{(1)}\otimes\gamma_{t},\upsilon_{s_{1}}^{(1)}\otimes\eta_{s})\geq m_{1},
\]
If $\bar{m}_{1}=m_{1}$ then we can take $\bar{\omega}_{\bar{t}}=\omega_{t_{1}%
}^{(1)}$, $\bar{\upsilon}_{\bar{s}}=\upsilon_{s_{1}}^{(1)}$ and finish the
procedure. Otherwise we can continue this procedure to find, for
$i=2,3,\cdots,$ $\omega_{t_{i}}^{(i)},\upsilon_{s_{i}}^{(i)}\in\Lambda
_{\bar{Q}}$ with, $t_{i}\geq t_{i-1}$, $s_{i}\geq s_{i-1}$ and $u(\omega
_{t_{i}}^{(i)},\upsilon_{s_{i}}^{(i)})\geq u(\omega_{t_{i}}^{(i),x}%
,\upsilon_{s_{i}}^{(i),y})$, for all $x$, $y$ such that $x+\omega^{(i)}%
(t_{i})$, $y+\upsilon^{(i)}(s_{i})\in\bar{Q}$, such that $\omega_{t_{i}}%
^{(i)}=$ $\omega_{t_{i-1}}^{(i-1)}\otimes\omega_{t_{i}}^{(i)}$, $\upsilon
_{s_{i}}^{(i)}=\upsilon_{s_{i-1}}^{(i-1)}\otimes\upsilon_{s_{i}}^{(i)}$ and
\begin{align*}
m_{i}  &  :=u(\omega_{t_{i}}^{(i)},\upsilon_{s_{i}}^{(i)})\geq\frac
{m_{i-1}+\bar{m}_{i-1}}{2},\\
\bar{m}_{i}  &  :=\sup_{_{\substack{\gamma_{t}\in\Lambda_{\bar{Q}},\ t\geq
t_{i}\\_{\eta_{s}\in\Lambda_{\bar{Q}},\ s\geq s_{i}}}}}u(\omega_{t_{i}}%
^{(i)}\otimes\gamma_{t},\upsilon_{s_{i}}^{(i)}\otimes\eta_{s})\geq m_{i},
\end{align*}
and continue this procedure till the first time when $\bar{m}_{i}=m_{i}$ and
then finish the proof by setting $\bar{\omega}_{\bar{t}}=\omega_{t_{i}}^{(i)}%
$. For the last and \textquotedblleft worst\textquotedblright\ case in which
$\bar{m}_{i}>m_{i}$, for all $i=0,1,2,\cdots$, we have $s_{i}\uparrow\bar{s}$,
$t_{i}\uparrow\bar{t}$, such that $\bar{s}$, $\bar{t}\in\lbrack0,T]$ Then we
can find $\bar{\omega}_{\bar{t}}$, $\bar{\upsilon}_{\bar{t}}\in\Lambda
_{\bar{Q}}$ such that $\bar{\omega}_{\bar{t}}=\omega_{t_{i}}^{(i)}\otimes
\bar{\omega}_{\bar{t}}$, $\bar{\upsilon}_{\bar{s}}=\upsilon_{s_{i}}%
^{(i)}\otimes\bar{\upsilon}_{\bar{s}}$ and $u(\bar{\omega}_{\bar{t}}%
,\bar{\upsilon}_{\bar{s}})\geq u(\bar{\omega}_{\bar{t}}^{x},\bar{\upsilon
}_{\bar{s}}^{y})$, for all $x$, $y$ such that $x+\bar{\omega}(\bar{t})$,
$y+\bar{\upsilon}(\bar{s})\in\bar{Q}$. Since $u\in USC_{\ast}(\Lambda_{\bar
{Q}})$. \ Since
\[
\bar{m}_{i+1}-m_{i+1}\leq\bar{m}_{i}-\frac{\bar{m}_{i}+m_{i}}{2}=\frac{\bar
{m}_{i}-m_{i}}{2},
\]
thus there exists $\bar{m}\in(m_{0},\bar{m}_{0})$, such that $\bar{m}%
_{i}\downarrow\bar{m}$ and $m_{i}\uparrow\bar{m}$. We can claim that
(\ref{rmax-a}) holds for this $(\bar{\omega}_{\bar{t}},\bar{\upsilon}_{\bar
{s}})$. Indeed, otherwise there exists a $\eta_{s}$, $\gamma_{t}\in
\Lambda_{\bar{Q}}$ with $s\geq\bar{s}$, $t\geq\bar{t}$ and $\gamma_{t}%
=\bar{\omega}_{\bar{t}}\otimes\gamma_{t}$, $\eta_{s}=\bar{\upsilon}_{\bar{s}%
}\otimes\eta_{s}$ and a $\delta>0$ such that
\[
u(\bar{\omega}_{\bar{t}}\otimes\gamma_{t},\bar{\upsilon}_{\bar{s}}\otimes
\eta_{s})\geq u(\bar{\omega}_{\bar{t}},\bar{\upsilon}_{\bar{s}})+\delta
=\bar{m}+\delta,\text{ }%
\]
then the following contradiction is induced:
\[
u(\bar{\omega}_{\bar{t}}\otimes\gamma_{t},\bar{\upsilon}_{\bar{s}}\otimes
\eta_{s})=u(\omega_{t_{i}}^{(i)}\otimes\gamma_{t},\upsilon_{s_{i}}%
^{(i)}\otimes\eta_{s})\leq\bar{m}_{i}\rightarrow\bar{m}.
\]

The proof is complete.
\end{proof}

\begin{description}
\item[(H3)] We make the following assumption for each $u,v\in\mathbb{R}$,
$p\in\mathbb{R}^{d}$ such that $u\geq v$,
\[
G(u,p)\leq G(v,p).
\]

\end{description}

\begin{theorem}
\textbf{\label{Thm-dom1-a}}(Comparison principle) We assume (\textbf{H3)}. Let
$Q$ be a bounded open subset of $\mathbb{R}^{d}$ and $u\in$\textrm{USC}%
$_{\ast}(\Lambda_{\bar{Q}})$ (resp. $v\in$\textrm{LSC}$_{\ast}(\Lambda
_{\bar{Q}})$) be a viscosity subsolution (resp. supersolution) of
\begin{equation}
D_{t}u+G(u(\omega_{t}),D_{x}u(\omega_{t}))=0.\ \ \label{visPDE0a}%
\end{equation}
Assume that $u$, $-v$ are bounded from above by $C$ and they are
continuous in the following sense: there exists a constant
$\bar{a}>0$, such that, for each $\omega_{t}\in\Lambda_{Q}$,
$\gamma_{s}=\omega_{t}\otimes\gamma_{s}$,
$\bar{\gamma}_{\bar{s}}=\omega_{t}\otimes\bar{\gamma}_{\bar{s}}\in
\Lambda_{\bar{Q}}$ such that $s$,$\bar{s}\in\lbrack
t,(t+\bar{a})\wedge T]$,
\begin{align}
&  |u(\omega_{t}\otimes\gamma_{s})-u(\omega_{t}\otimes\bar{\gamma}_{\bar{s}%
})|\vee|v(\omega_{t}\otimes\gamma_{s})-v(\omega_{t}\otimes\bar{\gamma}%
_{\bar{s}})|\label{u-Lipa}\\
&  \leq\rho(d(\gamma_{s},\bar{\gamma}_{\bar{s}})),\nonumber
\end{align}
where
\[
d(\gamma_{s},\bar{\gamma}_{\bar{s}})=|\gamma(s)-\gamma(\bar{s})|^{2}%
+|s-\bar{s}|^{2}+\int_{0}^{s\wedge\hat{s}}(s\wedge\hat{s}-r)|\gamma
(r)-\bar{\gamma}(r)|^{2}dr
\]
and $\rho:[0,\infty)\mapsto\mathbb{R}$ is a given continuous and increasing
function with $\rho(0)=0$. Then the following comparison principle holds: if
$u(\omega_{t})\leq v(\omega_{t})$ for $\omega_{t}\in\Lambda_{\partial Q_{T}}$,
then $u(\omega_{t})\leq v(\omega_{t})$ for $\omega_{t}\in\Lambda_{Q}$.
\end{theorem}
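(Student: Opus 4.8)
The plan is to repeat the left frozen maximization of Theorem \ref{Thm-dom1}, but now doubling the path variable in \emph{both} time and space and reading the first order jets directly off the penalty, so that the Crandall--Ishii Lemma \ref{Thm-8.3} (needed only to split Hessians) does not enter. First I would carry out the standard reduction to a strict subsolution: for $\bar\delta>0$ put $\tilde u:=u-\bar\delta/t$; by (H3) and $\tilde u\le u$ one gets $D_t\tilde u+G(\tilde u,D_x\tilde u)\ge c:=\bar\delta/T^2$ and $\tilde u(\omega_t)\to-\infty$ as $t\downarrow0$, and since $u\le v$ follows from $\tilde u\le v$ as $\bar\delta\downarrow0$ it suffices to treat a strict subsolution. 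Then assume, for contradiction, that $\tilde m:=u(\tilde\omega_{\tilde t})-v(\tilde\omega_{\tilde t})>0$ for some $\tilde\omega_{\tilde t}\in\Lambda_Q$, $\tilde t\in(0,T)$.

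Next I would double the variables along the two paths issuing from the \emph{common} prefix $\tilde\omega_{\tilde t}$. Writing $\gamma_s=\tilde\omega_{\tilde t}\otimes\gamma_s$, $\eta_{\bar s}=\tilde\omega_{\tilde t}\otimes\eta_{\bar s}$ with $s,\bar s\in[\tilde t,(\tilde t+\bar a)\wedge T]$ (so that the local continuity (\ref{u-Lipa}) applies with this prefix), set $w_\alpha(\gamma_s,\eta_{\bar s}):=u(\gamma_s)-v(\eta_{\bar s})-\tfrac{\alpha}{2}d(\gamma_s,\eta_{\bar s})$ and $M_\alpha:=\sup w_\alpha\ge M_*:=\sup_\gamma[u-v]\ge\tilde m$, the last inequality coming from the diagonal $\gamma=\eta$ where the penalty vanishes. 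Exactly as in the estimates preceding Theorem \ref{Thm-dom1}, applying (\ref{u-Lipa}) to any near maximizer gives $\tfrac{\alpha}{2}d(\gamma_s,\eta_{\bar s})\le\tfrac{c}{2}$ (and $\to0$ as $\alpha\to\infty$); moreover a near maximizer cannot touch $\Lambda_{\partial Q}$, since there $u\le v$ would, together with (\ref{u-Lipa}), force $w_\alpha\le\tilde m/2$. Hence both components stay in $\Lambda_Q$.

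I would then apply Lemma \ref{maxLemma-a} (with $\omega^{(0)}=\upsilon^{(0)}=\tilde\omega_{\tilde t}$) to upgrade the near maximizer to a genuine one: there exist $\bar\omega_{\bar t},\bar\upsilon_{\bar s}\in\Lambda_Q$ with $w_\alpha(\bar\omega_{\bar t},\bar\upsilon_{\bar s})\ge w_\alpha(\bar\omega_{\bar t}\otimes\gamma_t,\bar\upsilon_{\bar s}\otimes\eta_s)$ for all forward extensions $t\ge\bar t$, $s\ge\bar s$. Freezing the left parts and fixing $\bar\upsilon_{\bar s}$, the map $\gamma_t\mapsto\tfrac{\alpha}{2}d(\bar\omega_{\bar t}\otimes\gamma_t,\bar\upsilon_{\bar s})+\mathrm{const}$ is a smooth path functional touching $u$ from above along every forward extension, so by the first order analogue of the Remark following the viscosity definition it yields $(a_1,p)\in P^{1,+}u(\bar\omega_{\bar t})$ with $p=\alpha(\bar\omega(\bar t)-\bar\upsilon(\bar s))$ and $a_1=D_t[\tfrac{\alpha}{2}d]$; symmetrically, freezing $\bar\omega_{\bar t}$ gives $(a_2,p)\in P^{1,-}v(\bar\upsilon_{\bar s})$ with the \emph{same} $p$ and $a_2=-D_s[\tfrac{\alpha}{2}d]$. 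Since $w_\alpha\ge\tilde m>0$ forces $u(\bar\omega_{\bar t})>v(\bar\upsilon_{\bar s})$, the strict sub/supersolution inequalities for (\ref{visPDE0a}) together with (H3) give $c\le a_1-a_2+[G(u,p)-G(v,p)]\le a_1-a_2$.

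The whole argument thus reduces to proving $a_1-a_2<c$, and this is where I expect the real work to lie. The pure time contributions $2(\bar t-\bar s)$ coming from the $|s-\bar s|^2$ term cancel between $D_t[\tfrac{\alpha}{2}d]$ and $D_s[\tfrac{\alpha}{2}d]$, so that $a_1-a_2=\tfrac{\alpha}{2}\int_{\tilde t}^{\bar t\wedge\bar s}|\bar\omega(r)-\bar\upsilon(r)|^2\,dr$ (recall $\bar\omega,\bar\upsilon$ agree on $[0,\tilde t]$, and the horizontal derivatives are the forward one-sided derivatives, which absorb the kink of $t\wedge\bar s$ at $t=\bar s$). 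This is exactly the term the special distance $d$ is engineered to control: the weight $(s\wedge\bar s-r)$ in the integral and the restriction to a window of length $\bar a$ are what must be used to bound $\tfrac{\alpha}{2}\int_{\tilde t}^{\bar t\wedge\bar s}|\bar\omega-\bar\upsilon|^2\,dr$ by $\tfrac{c}{2}$ via the already-established control of $\tfrac{\alpha}{2}d(\bar\omega_{\bar t},\bar\upsilon_{\bar s})$. Granting that estimate one gets $c\le a_1-a_2\le\tfrac{c}{2}$, the desired contradiction, after which $\bar\delta\downarrow0$ finishes the proof; verifying this horizontal-derivative bound is the main obstacle, the remaining steps being routine adaptations of the smooth case and of Theorem \ref{Thm-dom1}.
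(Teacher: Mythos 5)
Your proposal follows the paper's own proof essentially step for step: the same reduction to a strict subsolution via $u-\bar{\delta}/t$, the same doubling of the path variable in both time and space with the penalty $\frac{\alpha}{2}d$, the same estimates showing that a near maximizer satisfies $\frac{\alpha}{2}d\leq\frac{c}{2}$ and cannot touch $\Lambda_{\partial Q}$, the same upgrade to a genuine maximizer by Lemma \ref{maxLemma-a}, and the same extraction of first order jets directly from the penalty (the paper, like you, never invokes Lemma \ref{Thm-8.3} here). One secondary difference: you localize by restricting extensions to $[\tilde{t},(\tilde{t}+\bar{a})\wedge T]$, which creates an artificial terminal boundary at $\tilde{t}+\bar{a}$ where no comparison data is available and which conflicts with Lemma \ref{maxLemma-a}, whose maximizer ranges over all forward extensions up to $T$; the paper instead proves the result on the last window $[T-\bar{a},T)$ and iterates backward over $[T-i\bar{a},T-(i-1)\bar{a})$, which avoids this issue. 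That part is fixable.

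The real problem is the step you leave open, and you should be aware that it is not a routine verification: it cannot be closed by the estimate you sketch, and it is exactly the point at which the paper's own proof is unjustified. The paper asserts $b:=\partial_{t}\bar{\varphi}_{\alpha}(\hat{t},\hat{s},0,0)=\alpha(\hat{t}-\hat{s})=-\partial_{s}\bar{\varphi}_{\alpha}(\hat{t},\hat{s},0,0)$, i.e.\ that the integral term of $d$ contributes zero to both forward time derivatives; with the same $b$ in both jets the time terms cancel and \textbf{(H3)} gives $c\leq G(u,p)-G(v,p)\leq0$ at once. Your computation is the correct one: since $\frac{d}{d\tau}\int_{0}^{\tau}(\tau-r)f(r)\,dr=\int_{0}^{\tau}f(r)\,dr$, the smaller of the two times picks up precisely the term $\frac{\alpha}{2}\int_{\tilde{t}}^{\bar{t}\wedge\bar{s}}|\bar{\omega}(r)-\bar{\upsilon}(r)|^{2}dr$, so the paper's two displayed equalities can hold simultaneously only when $\hat{t}=\hat{s}$ or the paths coincide up to $\hat{t}\wedge\hat{s}$. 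But this extra term is not dominated by the available bound $\frac{\alpha}{2}d\leq\frac{c}{2}$: the weight $\bar{t}\wedge\bar{s}-r$ vanishes at the right endpoint, so a difference of size $K$ concentrated on $[\bar{t}\wedge\bar{s}-\epsilon,\bar{t}\wedge\bar{s}]$ has weighted integral about $K\epsilon^{2}/2$ but unweighted integral $K\epsilon$, and the penalty bound only yields $\frac{\alpha}{2}K\epsilon\leq c/\epsilon$, which blows up as $\epsilon\downarrow0$; nothing in the left frozen construction excludes such maximizing paths. Worse, no redesign of the weight can help: if a weight $w\geq0$ satisfies $w(0)=0$ and $w'\leq Cw$ (which is what "the penalty dominates its own forward time derivative" requires), then $w\equiv0$ by Gronwall's inequality. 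So what you call the main obstacle is a genuine gap needing a new idea or an extra hypothesis, not the engineered estimate you hope for; your write-up has the merit of exposing a term that the paper's proof silently sets to zero.
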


We first observe that for $\bar{\delta}>0$, the functions defined by
$\tilde{u}_{i}:=u_{i}-\bar{\delta}/t$ is a subsolution of
\[
D_{t}\tilde{u}(\omega_{t})+G(\tilde{u}(\omega_{t}),D_{x}\tilde{u}(\omega
_{t}))=\frac{\bar{\delta}}{t^{2}},
\]
Since $u\leq v$ follows from $\tilde{u}\leq v$ in the limit $\bar{\delta
}\downarrow0$, it suffices to prove the theorem under the additional
assumptions:
\begin{equation}
D_{t}u(\omega_{t})+G(u(\omega_{t}),D_{x}u(\omega_{t}))\geq c,\ \ c:=\bar
{\delta}/T^{2}. \label{ineq-c-a}%
\end{equation}

To prove this theorem, for $\omega_{t},\upsilon_{s}\in\Lambda_{\bar{Q}}$, we
set%
\[
w_{\alpha}(\omega_{t},\upsilon_{s}):=u(\omega_{t})-v(\upsilon_{s}%
)-\frac{\alpha}{2}d(\omega_{t},\upsilon_{s}).
\]

\begin{proof}
[Proof of Theorem \ref{Thm-dom1-a}]The proof is still based on our
\textquotedblleft left frozen maximization\textquotedblright\ approach. We
only need to prove that $u(\omega_{t})\leq v(\omega_{t})$ for $\omega_{t}%
\in\Lambda_{\bar{Q}}$, $t\in\lbrack T-\bar{a},T)$, then repeat the smae
procedure for cases $[T-i\bar{a},T-(i-1)\bar{a}).$ To this end we assume to
the contrary that there exists $\tilde{\omega}_{\tilde{t}}\in\Lambda_{Q}$,
$\tilde{t}\in\lbrack T-\bar{a},T)$, such that $\tilde{m}:=u(\tilde{\omega
}_{\tilde{t}})-v(\tilde{\omega}_{\tilde{t}})>0$. \ Let $\alpha$ be a large
number such that
\[
\frac{1}{\alpha}+\rho(\frac{2}{\alpha}(\frac{1}{\alpha}+2C-M_{\ast}))\leq
\frac{1}{2}(\tilde{m}\wedge c).
\]
We set%
\[
M_{\alpha}:=\sup_{\substack{\gamma_{t},\eta_{s}\in\Lambda_{\bar{Q}}\\t,s\geq
T-\bar{a}\ }}w_{\alpha}(\tilde{\omega}_{\tilde{t}}\otimes\gamma_{t}%
,\tilde{\omega}_{\tilde{t}}\otimes\eta_{s})\geq M_{\ast}:=\sup
_{\substack{\gamma_{s}\in\Lambda_{\bar{Q}}\\s\geq T-\bar{a}\\\ }%
}[u(\tilde{\omega}_{\tilde{t}}\otimes\gamma_{s})-v(\tilde{\omega}_{\tilde{t}%
}\otimes\gamma_{s})]\geq\tilde{m}.
\]
We fix $\bar{\gamma}_{\bar{t}}$, $\bar{\eta}_{\bar{s}%
}\mathbf{\in}\Lambda_{\bar{Q}}$ satisfying
$\bar{\gamma}_{\bar{t}}=\tilde{\omega
}_{\tilde{t}}\otimes\bar{\gamma}_{\bar{t}}$, $\bar{\eta}_{\bar{s}}%
=\tilde{\omega}_{\tilde{t}}\otimes\bar{\eta}_{\bar{s}}$ and $w_{\alpha}%
(\bar{\gamma}_{\bar{t}},\bar{\eta}_{\bar{s}})+\frac{1}{\alpha}\geq M_{\alpha}%
$. We can check that
\begin{align*}
\frac{\alpha}{2}d(\bar{\gamma}_{\bar{t}},\bar{\eta}_{\bar{s}})  &  \leq
\frac{1}{\alpha}+u(\bar{\gamma}_{\bar{t}})-v(\bar{\eta}_{\bar{s}})-M_{\alpha
}\\
&  \leq\frac{1}{\alpha}+u(\bar{\gamma}_{\bar{t}})-v(\bar{\eta}_{\bar{s}%
})-M_{\ast}\\
&  \leq\frac{1}{\alpha}+2C-M_{\ast}.
\end{align*}
But we also have
\begin{align*}
M_{\ast}  &  \leq\frac{1}{\alpha}+[u(\bar{\gamma}_{\bar{t}})-u(\bar{\eta
}_{\bar{s}})]+[u(\bar{\eta}_{\bar{s}})-v(\bar{\eta}_{\bar{s}})]-\frac{\alpha
}{2}d(\bar{\gamma}_{\bar{t}},\bar{\eta}_{\bar{s}})\\
&  \leq\frac{1}{\alpha}+\rho(d(\bar{\gamma}_{\bar{t}},\bar{\eta}_{\bar{s}%
}))+[u(\bar{\eta}_{\bar{s}})-v(\bar{\eta}_{\bar{s}})]-\frac{\alpha}{2}%
d(\bar{\gamma}_{\bar{t}},\bar{\eta}_{\bar{s}})\\
&  \leq\frac{1}{\alpha}+\rho(\frac{2}{\alpha}(\frac{1}{\alpha}+2C-M_{\ast
}))+M_{\ast}-\frac{\alpha}{2}d(\bar{\gamma}_{\bar{t}},\bar{\eta}_{\bar{s}}).
\end{align*}
Thus
\[
\frac{\alpha}{2}d(\bar{\gamma}_{\bar{t}},\bar{\eta}_{\bar{s}})\leq\frac
{1}{\alpha}+\rho(\frac{2}{\alpha}(\frac{1}{\alpha}+2C-M_{\ast}))\leq\frac
{c}{2}.
\]
\ Moreover $\bar{\gamma}_{\bar{t}},\bar{\eta}_{\bar{s}}\in\Lambda_{Q}$. Indeed
if, say $\bar{\eta}_{\bar{s}}\in\Lambda_{\partial Q}$, then we will deduce the
following contradiction:
\begin{align*}
\tilde{m}  &  \leq M_{\ast}\leq M_{\alpha}\\
&  \leq\frac{1}{\alpha}+[u(\bar{\gamma}_{\bar{t}})-u(\bar{\eta}_{\bar{s}%
})]+u(\bar{\eta}_{\bar{s}})-v(\bar{\eta}_{\bar{s}})\\
&  \leq\frac{1}{\alpha}+[u(\bar{\gamma}_{\bar{t}})-u(\bar{\eta}_{\bar{s}})]\\
&  \leq\frac{1}{\alpha}+\rho(\frac{2}{\alpha}(\frac{1}{\alpha}+2C-M_{\ast
}))\leq\frac{\tilde{m}}{2}.
\end{align*}
Now we can apply Lemma \ref{maxLemma-a} to find $\hat{\omega}_{\hat{t}}%
,\hat{\upsilon}_{\hat{s}}\in\Lambda_{Q}$ satisfying $\hat{\omega}_{\hat{t}%
}=\bar{\gamma}_{\bar{t}}\otimes\hat{\omega}_{\hat{t}}$ and $\hat{\upsilon
}_{\hat{s}}=\bar{\eta}_{\bar{s}}\otimes\hat{\upsilon}_{\hat{s}}$ with
$w_{\alpha}(\bar{\omega}_{\bar{t}},\bar{\upsilon}_{\bar{s}})\geq w_{\alpha
}(\bar{\gamma}_{\bar{t}},\bar{\eta}_{\bar{s}})$ such that
\begin{align}
w_{\alpha}(\hat{\omega}_{\hat{t}},\hat{\upsilon}_{\hat{s}})  &  \geq
w_{\alpha}(\hat{\omega}_{\hat{t}}\otimes\gamma_{t},\hat{\upsilon}_{\hat{s}%
}\otimes\eta_{s}),\ \ \label{u-v-w-a}\\
\forall\gamma_{t},\eta_{s}  &  \in\Lambda_{Q},\ \ t\geq\hat{t},\ s\geq\hat
{s}.\nonumber
\end{align}
Since $w_{\alpha}(\hat{\omega}_{\hat{t}},\hat{\upsilon}_{\hat{s}})+\frac
{1}{\alpha}\geq w_{\alpha}(\bar{\gamma}_{\bar{t}},\bar{\eta}_{\bar{s}}%
)+\frac{1}{\alpha}\geq M_{\alpha}$, we still have%
\[
\frac{\alpha}{2}d(\hat{\omega}_{\hat{t}},\hat{\upsilon}_{\hat{s}})\leq\frac
{c}{2},\ \ \ \
\]
as well as
\[
u(\hat{\omega}_{\hat{t}})-v(\hat{\upsilon}_{\hat{s}})\geq M_{\alpha}-\frac
{1}{\alpha}\geq0.
\]

We just need to take
\begin{align*}
\gamma_{t}(r)  &  =x+\hat{\omega}(\hat{t})\mathbf{1}_{[\hat{t},t]}%
(r),\ \ r\leq t,\ \\
\eta_{s}(r)  &  =y+\hat{\upsilon}(\hat{s})\mathbf{1}_{[\hat{s},s]}%
(r),\ \ r\leq s,
\end{align*}
in (\ref{u-v-w-a}) and define%
\begin{align*}
\bar{u}(t,x)  &  =u(\hat{\omega}_{\hat{t}}\otimes\gamma_{t})=u((\hat{\omega
}_{\hat{t}})^{x})_{\hat{t},t-\hat{t}}),\ \ \\
\bar{v}(s,y) &  =v(\hat{\upsilon}_{\hat{s}}\otimes\eta_{s})=v((\hat{\upsilon
}_{\hat{s}})^{y})_{\hat{s},s-\hat{s}}),\\
\bar{\varphi}_{\alpha}(t,s,x,y)  &  =\varphi_{\alpha}(\hat{\omega}_{\hat{t}%
}\otimes\gamma_{t},\hat{\upsilon}_{\hat{s}}\otimes\eta_{s})=\varphi_{\alpha
}((\hat{\omega}_{\hat{t}})^{x})_{\hat{t},t-\hat{t}},(\hat{\upsilon}_{\hat{s}%
})^{y})_{\hat{s},s-\hat{s}}).
\end{align*}
Inequality (\ref{u-v-w-a}) becomes%
\begin{align*}
\bar{u}(\hat{t},0)-\bar{v}(\hat{t},0)-\bar{\varphi}_{\alpha}(\hat{t},0,0)  &
\geq\bar{u}(t,x)-\bar{v}(s,y)-\bar{\varphi}_{\alpha}(t,s,x,y),\\
t  &  \geq\hat{t},\ s\geq\hat{s},\ \ x+\hat{\omega}(\hat{t}),\ y+\hat
{\upsilon}(\hat{s})\in Q.
\end{align*}
Since
\[
d(\omega_{t},\upsilon_{s}):=|t-s|^{2}+|\omega(t)-\upsilon(s)|^{2}+\int%
_{0}^{t\wedge s}(t\wedge s-r)|\omega(r)-\upsilon(r)|^{2}dr,
\]
thus
\begin{align*}
b  &  :=\partial_{t}\bar{\varphi}_{\alpha}(\hat{t},\hat{s},0,0)=\alpha(\hat
{t}-\hat{s})=-\partial_{s}\bar{\varphi}_{\alpha}(\hat{t},\hat{s},0,0),\ \ \\
p  &  :=\partial_{x}\bar{\varphi}_{\alpha}(\hat{t},\hat{s},0,0)=\alpha
(\hat{\omega}(\hat{t})-\hat{\upsilon}(\hat{s}))=-\partial_{y}\bar{\varphi
}_{\alpha}(\hat{t},\hat{s},0,0).
\end{align*}
It follows that
\begin{align*}
(b,p)  &  \in P^{1,+}\hat{u}(\hat{t},0)\subset P^{1,+}u(\hat{\omega}_{\hat{t}%
}),\ \ \\
(b,p)  &  \in P^{1,-}\hat{v}(\hat{s},0)\subset\bar{P}^{1,-}v(\hat{\upsilon
}_{\hat{s}}).
\end{align*}
Consequently
\[
b+G(u(\hat{\omega}_{\hat{t}}),p)\geq c,\ \ b+G(v(\hat{\upsilon}_{\hat{s}%
}),p)\leq0.\ \
\]
It follows that
\[
c\leq b+G(u(\hat{\omega}_{\hat{t}}),p)-[b+G(v(\hat{\upsilon}_{\hat{s}%
}),p)]\leq0,\ \
\]
which induces a contradiction.
\end{proof}

\begin{remark} The above method can be also applied to obtain a
comparison principle for the case $G=G(\omega_t,u,p)$, $\omega_t\in
\Lambda_{\bar{Q}}$ under the following condition:
\[
G(\omega_{t}\otimes\gamma_{s},u,p)- G(\omega_{t}\otimes\bar{\gamma}_{\bar{s}%
},v,p)\leq \rho(d(\gamma_{s},\bar{\gamma}_{\bar{s}})),
\]
for each $u,v\in\mathbb{R}$, $p\in\mathbb{R}^{d}$,
 such that $u\geq v$ and for each
$\omega_{t}\in\Lambda_{Q}$,
$\gamma_{s}=\omega_{t}\otimes\gamma_{s}$,
$\bar{\gamma}_{\bar{s}}=\omega_{t}\otimes\bar{\gamma}_{\bar{s}}\in
\Lambda_{\bar{Q}}$ such that $s$,$\bar{s}\in\lbrack
t,(t+\bar{a})\wedge T]$.

\end{remark}

\end{document}